\definecolor{light-gray}{gray}{0.95}
\theoremstyle{definition}
\newtheorem{theorem}{Theorem}[section]
\newtheorem{lemma}[theorem]{Lemma}
\newtheorem{example}[theorem]{Example}
\newtheorem{examples}[theorem]{Examples}
\newtheorem{definition-lemma}[theorem]{Definition-Lemma}
\newtheorem{proposition}[theorem]{Proposition}
\newtheorem{corollary}[theorem]{Corollary}
\newtheorem{remark}[theorem]{Remark}
\def\toto{\rightrightarrows}
\def\xto{\xrightarrow}
\def\xfrom{\xleftarrow}
\def\R{{\mathbb R}}
\def\U{{\mathcal U}}
\def\action{\curvearrowright}
\def\from{\leftarrow}
\def\<{\langle}
\def\>{\rangle}
\def\then{\Rightarrow}
\def\id{{\rm id}}
\def\pr{{\rm pr}}
\def\raction{\curvearrowleft}
\def\X{{\mathfrak X}}
\def\g{{\mathfrak g}}
\def\mon{{\rm Mon}}
\def\blue{\textcolor{blue}}
\title{Lie Groupoids}
\author{Henrique Bursztyn}
\address{Instituto de Matem\'atica Pura e Aplicada,
Estrada Dona Castorina 110, Rio de Janeiro, 22460-320, Brasil }
\email{henrique@impa.br}
\author{Matias del Hoyo}
\address{Universidade Federal Fluminense (UFF), Rua Prof. M. W. de Freitas Reis, s/n, Niterói, 24.210-201 RJ, Brazil.}
\email{mldelhoyo@id.uff.br}
\begin{document}


\maketitle

\section{Introduction}


A Lie groupoid can be thought of as a generalization of a Lie group in which the multiplication is only defined for certain pairs of elements. From another perspective, Lie groupoids can be regarded as manifolds endowed with a type of ``action'' codifying internal and external symmetries. Examples naturally arise from Lie group actions, fibrations, and foliations, so Lie groupoids provide a unifying conceptual framework for these (and other) geometric objects.

$$\xymatrix@R=0pt{
  \txt{Group actions} \ar[rdd] &  \\
  \txt{Fibrations} \ar[rd] & \\
 \txt{Foliations} \ar[r] & \fbox{Lie groupoids}   \\
   \txt{Flows} \ar[ru] & \\ 
 \txt{Pseudo-groups} \ar[ruu] & \\
 \vdots &
}$$

Lie groupoids can be traced back to the work of C. Ehresmann in the 1950s \cite{ehresmann}. The vigorous development of their theory in the last few decades has been largely stimulated by their connections with such areas as Poisson geometry and non-commutative geometry \cite{dsw,connes,cdw}, as well as several topics in mathematical physics, including classical mechanics, quantization and topological field theories \cite{catfel,hawkins,we91,we96}. 
Lie groupoids are a rich source of $C^*$-algebras through their convolution algebras; they are also closely related to simplicial manifolds and differentiable stacks, having an important role in the emerging fields of higher and derived differential geometry. 


$$\xymatrix@R=0pt{
   & \txt{Lie algebroids} \\
\fbox{Lie groupoids} \ar[ru] \ar[r] \ar[rd] \ar[rdd]  &   \txt{Stacks} \\
& \txt{$C^*$-algebras}  \\
& \txt{Simplicial manifolds}
}$$

This article is an overview on Lie groupoids, including basic definitions, key examples and constructions, and topics such as actions and representations, local models, Morita equivalence and cohomology. Convolution algebras will not be trated in this paper, see e.g. \cite{connes,dsw}. Much of the material discussed in $\S$ \ref{sec:basic}--$\S$ \ref{section:actions} can be found in more detail in textbooks such as \cite{dsw,macbook,mmbook}.

We will work with $C^\infty$, real manifolds and smooth maps.


\section{Basic definitions}\label{sec:basic}


A {\bf Lie groupoid} $G\toto M$ consists of manifolds $M$ (called ``objects'') and $G$ (called ``arrows''), along with structure maps given by surjective submersions $s,t: G \to M$, called {\em source} and {\em target} maps, a smooth partial {\em multiplication} 
$$
m: G \times_M G\to G, \;\; m(h,g)= hg,
$$
where $G\times_MG=\{(h,g):s(h)=t(g)\}$ is the set of pairs of composable arrows (which is an embedded submanifold of $G\times G$ since $s$ and $t$ are submersions), a smooth {\em unit} map $u:M\to G$, $x\mapsto 1_x$, and a smooth {\em inverse} map $i: G\to G$, $g\mapsto g^{-1}$, satisfying group-like axioms described below. It is convenient to think of an element $g\in G$ as an arrow from its source $x=s(g)$ to its target $y=t(g)$, pictured as  $y\xfrom g x$. With this notation, the groupoid axioms are as follows.
\begin{enumerate}[\ \ $\bullet$]
    \item For $(h,g)\in G\times_M G$,  $s(hg)=s(g)$ and $t(hg)=t(h)$:
    $$
    m(z\xfrom h y,y\xfrom g x)=(z\xfrom{hg}x).
    $$
    \item $m$ is associative, namely $(k h)g=k(hg)$ whenever $k,h,g$ are composable;
    \item For each $x\in M$, $u(x)=1_x$ is an arrow $x\xfrom {1_x}x$ such that $1_{t(g)}g=g=1_{s(g)}$.
    \item For each $y\xfrom g x$ in $G$, $g^{-1}$ is an arrow $x\xfrom {g^{-1}}y$ such that $g^{-1}g=1_x$ and $gg^{-1}=1_y$.
\end{enumerate}  


It follows from this definition that $u$ is an embedding, $m$ is a submersion, and $i$ is a diffeomorphism. 
We will usually identify $M$ with the submanifold of $G$ defined by its image under $u$.

In the language of categories and functors, a groupoid is a small category in which every arrow is invertible, and a Lie groupoid is an internal groupoid in the category of smooth manifolds for which source and target maps are required to be submersions. 

Two simple but important examples of Lie groupoids arise when we set objects or arrows to be trivial. Lie groupoids with a single object $G\toto \ast$ are the same as Lie groups. A manifold $M$ gives rise to a Lie groupoid $M\toto M$ with only identity arrows, and this is the canonical way we regard manifolds as Lie grou\-poids. We will see more examples in $\S$ \ref{sec:ex}.

\begin{remark}
It is often necessary to consider Lie groupoids whose manifolds of arrows are non-Hausdorff. We will assume Lie groupoids to be Hausdorff for simplicity, indicating the results and examples where non-Hausdorffness is needed. 
\end{remark}


A {\bf morphism} $\phi:(G\toto M)\to(G'\toto M')$ between Lie groupoids consists of smooth maps $\phi_1:G\to G'$ and $\phi_0:M\to M'$ which are compatible with the source, target, multiplication and units, namely $\phi(1_x)=1_{\phi(x)}$, $s(\phi(g))=\phi(s(g))$, $t(\phi(g))=\phi(t(g))$ and
$\phi(h)\phi(g)=\phi(hg)$. 
In other words, a Lie groupoid morphism is a smooth functor. 
Two Lie groupoid morphisms $\phi,\psi:(G\toto M)\to (G'\toto M')$ are {\bf homotopic} if there is a smooth map $\alpha:M\to G'$ defining a natural isomorphism between $\phi$ and $\psi$.

A {\bf Lie subgroupoid} $H\toto N$ is a Lie groupoid together with a morphism $H\to G$ that is an injective immersion. The Lie subgroupoid is called {\bf wide} when $M=N$, {\bf full} if $g\in H$ whenever $t(g), s(g) \in N$, 
and  {\bf normal} if it is wide, $s(h)=t(h)$ for all $h\in H$, and $ghg^{-1}\in H$ for every $h\in H$ and $g\in G$ for which the composition makes sense.
Given a morphism $\phi: G \to G'$ with $\phi_0=\mathrm{Id}_M$, its kernel $\ker(\phi)=\{ g\in G\,|\, \phi(g) \text{ is a unit} \}$ is a normal subgroupoid of $G$; conversely, every embedded normal Lie subgroupoid is the kernel of such a morphism.



A Lie groupoid codifies a group formed by its {\em bisections}, as we briefly explain. Given a Lie groupoid $G\toto M$ and an open subset $U\subset M$,  a {\bf local bisection} $\sigma:U\to G$ is a local section for the source map such that the induced map $t\circ \sigma:U\to M$ is an open embedding. Equivalently, by identifying $\sigma$ with its image $B=\sigma(U)$, a local bisection can be defined as an embedded submanifold
$B\subset G$ such that both restrictions $s,t:B\to M$ are open embeddings. When these maps are diffeomorphisms onto $M$, one refers to $B$ as a (global) {\bf bisection}.
Setting $V=t(\sigma(U))$, we can visualize a local bisection as a collection of arrows from $U$ to $V$. Local bisections can be composed and inverted, and for every open $U\subset M$ we have the unit local bisection $u(U)$. Hence local bisections give rise to a {\bf pseudogroup} ${\rm Bis}_{loc}(G)$ on $M$. Global bisections form an infinite-dimensional Lie group ${\rm Bis}(G)$  \cite{schmed}.  When $G\toto *$ is a Lie group, the corresponding group of bisections is $G$ itself. For a manifold, the group of bisections is trivial.
Bisections can be used to define (left, right) translations and conjugation on a Lie groupoid. 




A Lie groupoid defines an equivalence relation on its manifold of objects whose equivalence classes are called {\em orbits}. Given a Lie groupoid $G \toto M$ and $x\in M$, 
the {\bf orbit} of $x$ is the set 
$$
O_x=\{y\,|\, \exists \; y\xfrom g x\}=t(s^{-1}(x)).
$$
The {\bf isotropy} group at $x$ is the group 
$$
G_x=s^{-1}(x)\cap t^{-1}(x)\subseteq G.
$$ 
A Lie groupoid is called {\bf transitive} if it has only one orbit, and it is called {\bf free} if its isotropy groups are all trivial. 
Using local bisection, one can see that, locally, a Lie groupoid looks the same around points of the same orbit.  
Also, if two points $x$, $y$ belong to the same orbit, so that they are connected by an arrow $y\xfrom g x$, then conjugation by $g$ gives an isomorphism between the isotropy groups $G_x\cong G_y$. 

Since the source map $s:G\to M$ is a submersion, for each $x\in M$ the source fiber $s^{-1}(x)$ is an embedded submanifold which projects onto the orbit $O_x$ via the target map, $t:s^{-1}(x) \to O_x$, and carries a right $G_x$-action by right multiplication: $s^{-1}(x) \times G_x\to s^{-1}(x)$,
$$
(y\xfrom{h} x, x\xfrom g x) \mapsto (y \xfrom{hg} x).
$$

\begin{proposition}
Given a Lie groupoid $G\toto M$ and $x\in M$, the following holds.
\begin{enumerate}[\ \ (a)]
    \item The isotropy group $G_x\subseteq G$ is an embedded submanifold and a Lie group.
    \item The action $s^{-1}(x) \times G_x\to s^{-1}(x)$ is smooth, free and proper.
    \item The orbit $O_x$ is an immersed submanifold of $M$ (possibly not embedded) in such a way that $t: s^{-1}(x) {\to} O_x$ is a principal $G_x$-bundle.
\end{enumerate}
\end{proposition}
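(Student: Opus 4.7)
The plan is to proceed in the order (a), (b), (c), with the constant-rank analysis of the restriction $\tau := t|_{L_x}$, where $L_x := s^{-1}(x)$, as the linchpin.

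For (a), $L_x$ is an embedded submanifold because $s$ is a submersion, and $G_x = \tau^{-1}(x)$ set-theoretically. I would establish that $\tau$ has constant rank by producing, for each $g \in L_x$, a local diffeomorphism $\Phi$ of $L_x$ sending a neighborhood of $1_x$ onto a neighborhood of $g$ and intertwining $\tau$ with a local diffeomorphism of $M$. Concretely, choose a local bisection $\sigma : U \to G$ with $\sigma(x) = g$ and define $\Phi(h) := \sigma(t(h)) \cdot h$; then $\Phi(1_x) = g$, $t \circ \Phi = (t \circ \sigma) \circ t$, and $\Phi$ is a diffeomorphism onto its image with inverse built from $\sigma^{-1}$. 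The constant-rank theorem then presents $G_x$ as an embedded submanifold of $L_x$ (hence of $G$), and the Lie group structure is automatic since the group operations on $G_x$ are restrictions of the smooth operations on $G$.

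For (b), smoothness of the action is inherited from $m$. Freeness follows from the cancellation $hg = h \Rightarrow g = h^{-1} h = 1_x$. For properness, given sequences with $h_n \to h$ and $h_n g_n \to h'$ in $L_x$, continuity of $m$ and $i$ yields $g_n = h_n^{-1}(h_n g_n) \to h^{-1} h'$, and a routine source/target check confirms $h^{-1} h' \in G_x$.

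For (c), the quotient manifold theorem for free proper actions applies: $L_x / G_x$ carries a smooth structure and $L_x \to L_x / G_x$ is a principal $G_x$-bundle. The $G_x$-orbits on $L_x$ are exactly the fibers of $\tau$ (if $t(h_1) = t(h_2)$ then $h_1^{-1} h_2 \in G_x$), so $\tau$ descends to an injection $\bar\tau : L_x / G_x \to M$ with image $O_x$; I transport the smooth structure along $\bar\tau$. A dimension count shows $\dim(L_x / G_x) = \dim L_x - \dim G_x$ equals the constant rank of $\tau$, whence $\bar\tau$ is an immersion, $O_x$ is an immersed submanifold of $M$, and $t : L_x \to O_x$ is the advertised principal $G_x$-bundle.

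The main obstacle is the constant-rank step, which hinges on the existence of a local bisection through every $g \in G$. This reduces to producing a subspace $W \subset T_g G$ of dimension $\dim M$ that is a common complement to $\ker(ds_g)$ and $\ker(dt_g)$; such $W$ exists because neither kernel exhausts $T_g G$ and the locus of $(\dim M)$-planes meeting either kernel nontrivially is a proper subset of the Grassmannian. Lifting such a linear complement to a smooth section of $s$ with the prescribed tangent is then a routine application of the submersion normal form of $s$.
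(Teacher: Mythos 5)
Your argument is correct and follows exactly the route the paper indicates (it states the proposition without proof, but the surrounding discussion of local bisections and of the free $G_x$-action on $s^{-1}(x)$ points to precisely this strategy): use bisections through an arbitrary arrow to show $t|_{s^{-1}(x)}$ has constant rank, deduce (a), verify the action is free and proper, and invoke the quotient manifold theorem to get the principal bundle and the immersed orbit. The supporting details you supply --- the translation $\Phi(h)=\sigma(t(h))h$ intertwining $\tau$ with the open embedding $t\circ\sigma$, the existence of a common complement to $\ker ds_g$ and $\ker dt_g$, and the rank count showing $\bar\tau$ is an immersion --- are all sound.
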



The {\bf orbit space} $M/G$ of a Lie groupoid $G\toto M$ is the set of orbits with the quotient topology. The quotient map $M\to M/G$ is open, and the partition of $M$ into the connected components of the orbits is a singular foliation, called the {\bf characteristic foliation} of $G$. A Lie grou\-poid is {\bf regular} if so is this foliation, or equivalently if every isotropy group has the same dimension. 



A Lie groupoid $G\toto M$ is said to be {\bf $s$-connec\-ted} (resp. {\bf $s$-simply connec\-ted}), if the source fibers $s^{-1}(x)$ are connected (resp. connected and simply connected).


\section{Lie algebroids and Lie theorems}

Lie groupoids have infinitesimal counterparts that extend the way Lie algebras arise from Lie groups.

A {\bf Lie algebroid} consists of a vector bundle $A\to M$ together with a vector-bundle map $\mathtt{a}:A\to TM$, called the {\it anchor map}, and a Lie bracket  $[\cdot,\cdot]$ on $\Gamma(A)$ satisfying the Leibniz rule,
$$
[u,fv]=f[u,v]+ (\mathtt{a}(u) f) v
$$
for all $u,v \in \Gamma(A)$ and $f\in C^ \infty(M)$.
Lie algebroid morphisms are vector bundle maps compatible with the bracket and the anchor, see \cite{macbook}.
We use the notation $A\Rightarrow M$ for a Lie algebroid.

Given a Lie groupoid $G\toto M$, its structure maps give rise to a Lie algebroid $A_G \Rightarrow M$ as follows: 

\begin{enumerate}[\ \ $\bullet$]
    \item As a vector bundle, $A_G=\ker(ds)|_M\to M$ is the normal bundle of $M\subset G$; so a vector $v\in (A_G)_x$ is tangent to the $s$-fiber at $x$;
    \item The Lie bracket $[\cdot,\cdot]$  on $\Gamma(A_G)$ is defined by identifying the sections of $A_G$ with vector fields $X$ on $G$
    that are tangent to $s$-fibers and right-invariant, 
    $$
    dR_h(X_g)=X_{hg},
    $$
    where $R_h: s^{-1}(t(h))\to s^{-1}(s(h))$ is defined by right translation, $R_h(g)=hg$;
    \item The anchor map is the restriction of $dt: TG\to TM$ to $A_G\subseteq TG|_M$, 
    $$
    dt|_{A_G}:A_G\to TM.
    $$ 
\end{enumerate}


A Lie algebroid $A\Rightarrow M$ is {\bf integrable} if there is a Lie groupoid $G\toto M$ and an isomorphism $\psi:A_G\to A$; in this case we say that $(G,\psi)$ is an {\bf integration} of $A$. In contrast with Lie algebras, not every Lie algebroid is integrable \cite{almo}.



The construction of the Lie algebroid associated with a Lie groupoid is functorial:  any Lie groupoid morphism $\phi: G\to H$ differentiates to a Lie algebroid morphism
$$
\phi': A_G\to A_H.
$$
This gives rise to the so-called {\bf Lie functor} 
$$
\text{Lie groupoids}\to\text{Lie algebroids},
$$
extending the one from Lie groups to Lie algebras.
The interplay between global and infinitesimal objects is governed by the following generalizations of the classical fundamental Lie theorems, see e.g. \cite[$\S$ 6.3]{mmbook}. 
 

\begin{theorem}[Lie 1]\label{thm:Lie1}
Any integrable Lie algebroid $A\then M$ admits a universal integration $(G(A)\toto M,\tilde\psi)$, possibly non-Hausdorff, in the sense that if $(G\toto M,\psi)$ is another integration, then there is a unique morphism $\phi:G(A)\to G$ such that $\psi\phi'=\tilde\psi$.
\end{theorem}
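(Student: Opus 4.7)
The strategy is to construct $G(A)$ as the $s$-simply connected cover of a given integration, and then to deduce universality from a Lie II-type monodromy principle. Since $A$ is integrable, fix an integration $(H \toto M, \psi_0)$; replacing $H$ by the open subgroupoid $H^\circ = \bigcup_{x \in M} s^{-1}(x)^\circ$ formed by the connected components of the units in each $s$-fiber, we may assume $H$ is $s$-connected, and $A_H \cong A$ via the same $\psi_0$.

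I would then define $G(A)$ fiberwise as the universal cover: a point of $G(A)$ lying over $g \in H$ is a relative homotopy class $[\gamma]$ of paths in $s^{-1}(s(g))$ from $1_{s(g)}$ to $g$. Endow $G(A)$ with the unique smooth structure making the projection $p:G(A) \to H$ a local diffeomorphism. The groupoid operations on $H$ lift uniquely to $G(A)$ by standard path-lifting arguments (using that $p$ restricts to a universal covering on each $s$-fiber and that multiplication is smooth), yielding a Lie groupoid $G(A) \toto M$. By construction each $s$-fiber of $G(A)$ is simply connected, and $\tilde\psi := \psi_0 \circ dp : A_{G(A)} \to A$ is an isomorphism. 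Note $G(A)$ can fail to be Hausdorff even when $H$ is, because fundamental groups of $s$-fibers may jump along $M$; this explains the qualifier in the statement.

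For the universal property, let $(G \toto M, \psi)$ be any other integration. The composition $\psi^{-1} \circ \tilde\psi : A_{G(A)} \to A_G$ is a Lie algebroid isomorphism over $\mathrm{id}_M$, and the task is to integrate it to a unique morphism $\phi : G(A) \to G$. A clean way is to realize its graph $\Gamma \subset A_{G(A)} \oplus A_G$ as a subalgebroid of the product algebroid, which is integrated by $G(A) \times G$. Since $\Gamma$ projects isomorphically onto $A_{G(A)}$ and $G(A)$ has simply connected $s$-fibers, the leaf through the unit section of $G(A) \times G$ integrates $\Gamma$ as an embedded subgroupoid that is the graph of the required smooth functor $\phi$; uniqueness follows from $s$-connectedness of $G(A)$ together with the fact that a Lie groupoid morphism is determined by its differential on a neighborhood of the unit section.

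The main obstacle lies in the last monodromy step: turning the formal graph subalgebroid into the graph of a smooth functor, i.e.\ showing that the integral leaf projects diffeomorphically onto $G(A)$ rather than as a nontrivial cover. This is exactly where the simple connectedness of the $s$-fibers of $G(A)$ is essential, and it is the Lie groupoid analogue of the classical fact that a Lie algebra homomorphism integrates from a simply connected Lie group to any Lie group. The preceding construction of the smooth structure on $G(A)$ and the verification that $s$ is a submersion with simply connected fibers, while essentially routine, also need care in the potentially non-Hausdorff setting, since one must lift the source submersion and the multiplication without assuming Hausdorffness of the total space.
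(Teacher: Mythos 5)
Your proposal is correct and follows essentially the same route the paper indicates: build $G(A)$ by assembling the universal covers of the $s$-fibers of a given ($s$-connected) integration, and obtain the universal morphism by integrating the graph of the induced algebroid isomorphism inside the product groupoid, exactly as in the paper's sketch of Lie 2. The points you flag as delicate (the smooth structure on the fiberwise universal cover, possible non-Hausdorffness from vanishing cycles, and the monodromy step requiring $s$-simple connectedness) match the paper's own remarks.
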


The integration $G(A)$ is unique, up to isomorphism, and characterized by the property of being $s$-simply connected. Indeed, one can build $G(A)$ by assembling the universal covering spaces of the $s$-fibers of any given integration $G$. 
Moreover, $G(A)$ is Hausdorff if and only if the foliation by $s$-fibers has no vanishing cycles.


The next result concerns the integration of morphisms.

\begin{theorem}[Lie 2]
Let  $G\toto M$ be an s-simply-connected Lie groupoid, $H\toto N$ a Lie groupoid, and $\psi:A_G\to A_H$ a Lie algebroid morphism. Then there exists a unique morphism $\phi:G\to H$ such that $\phi'=\psi$.   
\end{theorem}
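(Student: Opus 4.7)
The plan is to realize $\psi$ as the graph of a Lie subalgebroid of $A_G \times A_H$ and integrate it inside the product groupoid $G \times H$. Consider $G \times H \toto M \times N$, whose Lie algebroid is $A_G \times A_H \then M \times N$. Since $\psi$ is a Lie algebroid morphism, its graph
\[
\Gamma_\psi := \{(v, \psi(v)) : v \in A_G\} \subset A_G \times A_H
\]
is a Lie subalgebroid, and the first projection $\mathrm{pr}_1: \Gamma_\psi \to A_G$ is an isomorphism. In particular $\Gamma_\psi$ is integrable, because $A_G$ is integrated by $G$.

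Next, I would invoke Theorem~\ref{thm:Lie1} to produce a universal $s$-simply-connected integration $\widetilde\Gamma \toto M$ of $\Gamma_\psi$. The inclusion $\iota: \Gamma_\psi \hookrightarrow A_{G \times H}$ is a Lie algebroid morphism, so the universal property of $\widetilde\Gamma$ yields a Lie groupoid morphism $j: \widetilde\Gamma \to G \times H$ with $j' = \iota$. Composing with the first projection $p_1: G \times H \to G$ gives $f := p_1 \circ j: \widetilde\Gamma \to G$, whose differential is the isomorphism $\mathrm{pr}_1$. Since both $\widetilde\Gamma$ and $G$ are $s$-simply-connected integrations of $A_G$, the uniqueness clause of Lie 1 forces $f$ to be a Lie groupoid isomorphism. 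I then define
\[
\phi := p_2 \circ j \circ f^{-1} : G \to H,
\]
where $p_2: G \times H \to H$ is the second projection; its differential is $p_2' \circ \iota \circ \mathrm{pr}_1^{-1} = \psi$ by construction.

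For uniqueness, suppose $\phi_1, \phi_2: G \to H$ both integrate $\psi$. On units they agree with the base map of $\psi$, and for every $v \in \Gamma(A_G)$ the right-invariant vector field $\overrightarrow v$ on $G$ is $\phi_i$-related to $\overrightarrow{\psi(v)}$ on $H$. Hence $\phi_1$ and $\phi_2$ coincide along the flows of such right-invariant vector fields issuing from units. Because $G$ is $s$-simply-connected (in particular $s$-connected) and the $s$-fiber $s^{-1}(x)$ is swept out by finite concatenations of such flow trajectories starting at $1_x$, one concludes $\phi_1 = \phi_2$ throughout $G$.

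The crux of the argument, and the main obstacle, is the identification of $\widetilde\Gamma$ with $G$: one must lift the algebroid isomorphism $\mathrm{pr}_1$ to a diffeomorphism of groupoids. This is precisely where the $s$-simple-connectedness hypothesis on $G$ is essential, since without it $f$ would be only an étale cover and $\phi$ could fail to be defined on all of $G$. The rest reduces to the bookkeeping of graphs and the flow-based rigidity lemma underlying the uniqueness step.
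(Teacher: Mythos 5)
Your overall route is the same as the paper's: form the graph $\Gamma_\psi\subset A_G\times A_H$, integrate it inside $G\times H$, and use $s$-simple-connectedness to recognize the result as the graph of a morphism. The uniqueness argument via flows of right-invariant vector fields is also fine. However, there is a genuine gap at the step where you produce $j:\widetilde\Gamma\to G\times H$ with $j'=\iota$. The universal property in Theorem~\ref{thm:Lie1}, as stated, only compares \emph{integrations of one and the same algebroid}: it gives a morphism $G(A)\to G$ when $G$ comes equipped with an \emph{isomorphism} $A_G\to A$. The inclusion $\iota:\Gamma_\psi\hookrightarrow A_{G\times H}$ is not an isomorphism, so Lie~1 does not apply; producing a groupoid morphism out of an $s$-simply-connected groupoid integrating an arbitrary algebroid morphism is precisely the statement of Lie~2 that you are trying to prove. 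As written, the argument is circular at its central step.

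To close the gap one must actually carry out the integration of the subalgebroid inside $G\times H$, which is where the real work lies: right-translating $\Gamma_\psi$ yields an involutive distribution on $G\times H$ tangent to the source fibers; the leaves of the induced foliation through the units form an immersed, $s$-connected Lie subgroupoid $L\subset G\times H$ with $A_L=\Gamma_\psi$. Since $\pr_1:\Gamma_\psi\to A_G$ is an isomorphism, the projection $p_1:L\to G$ restricts on each source fiber to a local diffeomorphism with the lifting property for flows of right-invariant vector fields, hence to a covering of $s^{-1}(x)$; simple connectedness of $s^{-1}(x)$ then makes it a diffeomorphism, so $L$ is the graph of the desired $\phi$. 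Note also that you have mislocated the crux: the identification of $\widetilde\Gamma$ with $G$ is a formal consequence of the uniqueness clause of Lie~1 once $j$ exists, whereas the existence of $j$ (equivalently, of the subgroupoid $L$) is the step that cannot be obtained formally.
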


Just as Lie 1, the previous theorem has a proof analogous to the case of Lie groups and Lie algebras: one finds (the graph of) $\phi$ by integrating the subalgebroid of $A_G\times A_H$ given by the graph of $\psi$ to a subgroupoid of $G\times H$.

A consequence of Lie 1 and Lie 2 is that the Lie functor establishes an equivalence 
 between the category of $s$-simply connected Lie groupoids and the category of integrable Lie algebroids.


As previously mentioned, not every Lie algebroid comes from a Lie groupoid, so the direct generalization of Lie 3 (asserting that every Lie algebra is integrable) does not hold. A fundamental result in \cite{cf} uses infinite-dimensional techniques (the path-space construction of the so-called {\em Weinstein groupoid}, see also \cite{sev}) to identify the precise obstructions to integrability, given in terms of the uniform discreteness of certain ``monodromy groups''; see \cite{mein} for a further discussion.

One can also consider weaker forms of integration, e.g. by {\em local} Lie groupoids \cite{cms} (see \cite{femi} for the passage from local to global), 
or by allowing non-smooth global objects \cite{tz}.

\section{Examples and basic constructions}\label{sec:ex}

Lie groupoids unify several classical geometrical objects, which can be used to illustrate different aspects of the general framework and to which the general theory can be applied.

We already mentioned that Lie groups are identified with Lie groupoids over a single object, and manifolds can be regarded as Lie groupoids with only identity arrows.
We now list several other fundamental examples.




\begin{example} 
Given a manifold $M$, its {\bf pair groupoid} $M\times M \toto M$ has exactly one arrow between any two objects, $y\xfrom{(y,x)}x$, and multiplication
$$
(z,y)(y,x)=(z,x).
$$
This is a transitive Lie groupoid with trivial isotro\-pies. Its group of (global) bisections is the diffeomorphism group of $M$. Note that, for any Lie groupoid $G\toto M$, there is a natural groupoid morphism $(t,s): G\to M\times M$. The associated Lie algebroid is $TM \Rightarrow M$, with the usual Lie bracket of vector fields and the identity map as the anchor.
\end{example}

\begin{example}\label{ex:submersion}
A surjective submersion $\tau:M\to N$ yields a {\bf submersion groupoid} $$
M\times_NM\toto M,
$$
which is a wide Lie subgroupoid of $M\times M$ with one arrow between two objects if and only if they belong to the same fiber. The isotropies are trivial and the orbits are the fibers of $\tau$, so it is possible to recover $N$, up to diffeomorphism,
as the orbit space of $M\times_N M$. The associated
Lie algebroid is given by the involutive distribution tangent to the $\tau$-fibers. When $\tau:M\to \ast$ is the projection to a point, we just obtain the pair groupoid. An interesting example is when $\tau: \U=\coprod_i U_i\to M$ is an open cover, in which case we obtain the {\bf Cech groupoid} 
\begin{equation}\label{eq:cech}
G_\U= \coprod_{ji} U_{ji}\toto\coprod_i U_i,
\end{equation}
with objects the pairs $(x,i)$ with $x\in U_i$, and arrows $(x,j)\from(x,i)$ with $x\in U_{ji} = U_j\cap U_i$.
\end{example}

\begin{example}\label{ex:fundgrp}
Given $M$ a manifold, its {\bf fundamental groupoid} $\Pi_1(M)\toto M$
is a Lie groupoid whose arrows are given by homotopy classes of paths on $M$,
$$
\gamma(1) \xfrom{[\gamma]} \gamma(0),
$$
and multiplication defined by concatenation. 
The orbits of $\Pi_1(M)$ are the connected components of $M$, and its isotropy group at $x$ is the fundamental group $\pi_1(M,x)$ of the connected component containing $x$. For $M$ connected, $\Pi_1(M)$ is transitive and each $s$-fiber is identified with the universal cover of $M$ as a principal $\pi_1(M)$-bundle. When $M$ is connected and simply connected, $\Pi_1(M)$ coincides with the pair groupoid $M\times M$.
The fundamental groupoid $\Pi_1(M)$ is the universal integration of $TM\Rightarrow M$, in the sense of Lie 1 (Theorem~\ref{thm:Lie1}).
\end{example}


\begin{example}
A {\bf bundle of Lie groups} $\{G_x\}_{x\in M}$ is the same as a Lie groupoid $G\toto M$ 
whose source and target maps coincide.
As a particular case, any vector bundle $E\to M$ can be regarded as a bundle of abelian Lie groups, where the multiplication is given by fiber-wise addition. Bundles of Lie groups need not be locally trivial.
An example is $\{\R^2_\epsilon\}_{\epsilon\in\R}$, where the product in $\R^2_\epsilon$ is given by $(x,y)\cdot_\epsilon(x',y')=(x+x',y+e^{x\epsilon}y')$.
The infinitesimal counterpart of a bundle of Lie groups is a bundle of Lie algebras, i.e., a Lie algebroid with trivial anchor ($\mathtt{a}=0$), in such a way that each fiber is a Lie algebra.
\end{example}

\begin{example}\label{ex:action-gpd} 
An action  $K\action M$ of a Lie group $K$ on a manifold $M$ gives rise to an {\bf action groupoid} $K\times M\toto M$, with arrows $k.x \xfrom{(k,x)} x$ and multiplication given by
$$
(l,y)(k,x)=(lk,x).
$$
Action groupoids are written as $K\ltimes M$.
The orbits and isotropy groups of the action grou\-poid coincide with those of the action.
Note that it is not always possible to recover the group action from the action groupoid:
e.g., when the $K$-action on $M$ is free and transitive, the action groupoid is identified with the pair groupoid $M\times M$. The Lie algebroid of the action groupoid is the so-called {\em action Lie algebroid} corresponding to the infinitesimal action.
\end{example}

\begin{example} 
Given a complete vector field $X$  on a manifold $M$, its flow defines an action $\R\curvearrowright M$, which yields an action groupoid $\R\times M\toto M$. When $X$ is not complete we do not have an $\R$-action, but we can still find a maximal open subset $U_X\subseteq \R\times M$ where its flow is defined, 
$$
\varphi_X: U_X\to M, \; (t,x)\mapsto \varphi_X^t(x).
$$
The flow of $X$ gives rise to the {\bf flow groupoid} $U_X\toto M$, with arrows 
$\varphi_X^t(x) \xfrom{(t,x)} x$ and multiplication $(s,y)(t,x)=(s+t,x)$. The orbits of the flow groupoid are the integral curves of $X$. The associated Lie algebroid is $\R\times M\to M$ with anchor map given by $1\mapsto X$ (which uniquely determines the bracket). Integration of vector fields sets a bijective correspondence between vector fields and flow groupoids.
\end{example}

\begin{example}\label{ex:GLE}
Generalizing the general linear group of a vector space, any vector bundle $E\to M$ has a corresponding {\bf general linear grou\-poid} $\mathrm{GL}(E) \toto M$ with arrows $y \xfrom{} x$ given by linear isomorphisms $T: E_x\to E_y$, and multiplication defined by composition. This is a transitive Lie groupoid with isotropy groups given by the general linear groups of the fibers. The group of (global) bisections is the group of vector-bundle automorphisms of $E$. The Lie algebroid of $\mathrm{GL}(E)$ has as space of sections the Lie algebra of {\em derivations} of $E$.
\end{example}

\begin{example}\label{ex:gauge}
Let $K$ be a Lie group and $\tau:P\to M$ be a principal $K$-bundle.
The {\bf gauge groupoid} $P\times_K P\toto M$ has as arrows the $K$-equivariant maps $\tau^{-1}(x)\to \tau^{-1}(y)$. Alternatively, one can regard $P\times_K P$ as the quotient of the pair groupoid $P\times P$ by the diagonal $K$-action, noticing that the $K$-orbit of $(q,p)$, with $\tau(q)=y$ and $\tau(p)=x$,  can be thought of as (the graph of) a $K$-equivariant map  
from $\tau^{-1}(x)$ to $\tau^{-1}(y)$. The gauge groupoid is a transitive Lie groupoid with isotropy groups (noncanonically) isomorphic to $K$. The group of bisections of $P\times_K P$ is the group of $K$-invariant diffeomorphisms of $P$. The associated Lie algebroid is the so-called {\em Atiyah algebroid} of $P$, whose sections are the $K$-invariant vector fields on $P$. 

Every transitive groupoid is isomorphic to the gauge groupoid of the principal $G_x$-bundle $t:s^{-1}(x)\to M$, for any $x\in M$. So we can revisit Examples \ref{ex:fundgrp}
and \ref{ex:GLE} from this perspective:
\begin{enumerate}[\ \ $\bullet$]
\item When $M$ is connected, its fundamental grou\-poid $\Pi_1(M)$ is identified with the gauge grou\-poid of the principal $\pi_1(M)$-bundle $\tau:\tilde{M}\to M$ defined by its universal cover.

\item The general linear groupoid $\mathrm{GL}(E)$ of a vector bundle $E\to M$ is identified with the gauge groupoid of the frame bundle $\mathrm{Fr}(E)\to M$.
\end{enumerate}
The correspondence between principal bundles and transitive groupoids relies on the choice of a point and is not an equivalence of categories, as easily seen when $M$ is a point.

\end{example}

\begin{example}
Consider a foliation on $M$, defined by an involutive subbundle $F\subseteq TM$.
There are two Lie groupoids naturally associated with it.
The {\bf monodromy groupoid} 
$$
\mathrm{Mon}(F)\toto M
$$ 
has arrows given by  homotopy classes of paths within a leaf, and the {\bf holonomy groupoid} 
$$
\mathrm{Hol}(F)\toto M
$$ 
is the quotient of $\mathrm{Mon}(F)\toto M$ where two paths $\gamma$ and $\sigma$ with the same endpoints are identified if $\sigma\gamma^{-1}$ has trivial holonomy (meaning that, after covering this path with a finite sequence of foliated charts, the composition of the transition maps is the identity). 
The groupoid multiplication in both cases is given by concatenation, and the orbits of both groupoids coincide with the leaves of $F$.
One can use foliated charts to build manifold charts for $\mathrm{Mon}(F)$ and $\mathrm{Hol}(F)$, but the resulting topologies may be non-Hausdorff. In fact, $\mathrm{Mon}(F)$ is non-Hausdorff if and only if $F$ has vanishing cycles, see e.g. {\cite{dhl}}. 

Both Lie groupoids $\mathrm{Mon}(F)$ and $\mathrm{Hol}(F)$ are integrations of $F$, regarded as a Lie subalgebroid of $TM\Rightarrow M$, the monodromy groupoid being the universal one (while the holonomy groupoid is the ``smallest'' integration \cite{crmo}). 
\end{example}

\begin{example}\label{ex:orbifold}
We refer to \cite{mopr} for basic definitions of orbifolds. Given  an orbifold $Q$ and  an orbifold atlas $\U=\{(U_i,G_i,\phi_i)\}$, then its {\bf Cech groupoid}
$$
G_\U\toto \coprod_i U_i
$$
is the colimit of the action groupoids $G_i\times U_i\toto U_i$. Its arrows are germs spanned by (i) maps in some $G_i$, and (ii) orbifold chart embeddings $U_{ij}\to U_j$, and they are endowed with the sheaf-like manifold structure.
When every $G_i$ is trivial, so that $Q$ is a manifold, this recovers the Cech groupoid of Example \ref{ex:submersion}. 
\end{example}

\begin{example}\label{ex:tancot}
There are various Lie groupoids naturally built from a given one.
For a Lie grou\-poid $G\toto M$, its tangent and cotangent bundles have natural Lie groupoid structures,
$$
TG \toto TM, \qquad T^*G\toto A^*_G.
$$
For the tangent bundle, the structure maps are obtained by differentiating the structure maps of $G$. On the cotangent bundle, they can be described by a general duality principle \cite[$\S$ 11.2]{macbook}. When $G\toto \ast$ is a  Lie group, $TG= G\times \g \toto \ast$ is the semi-direct product Lie group with respect to the adjoint representation, while $T^*G= G\times \g^* \toto \g^*$ is the action groupoid with respect to the coadjoint action.
\end{example}




Other examples arise from functorial constructions with Lie groupoids, some of which we mention now. 

By considering $k$-jets of local bisections of $G$, one obtains the so-called {\em jet groupoids} $J^kG \toto M$, that arise e.g. in the theory of Lie pseudo\-groups \cite{cy}.


Given two Lie groupoids $G\toto M$ and $G'\toto M'$, their {\bf product} $G\times G'\toto M\times M'$ is naturally a Lie groupoid, with source, target, multiplication and unit defined componentwise. More generally, given another Lie groupoid $H\toto N$ and groupoid morphisms $\phi:G\to H$ and $\psi:G'\to H$, their {\bf fibered product} $G\times_H G'\toto M\times_N M'$ is a well-defined Lie groupoid whenever $\phi$ and $\psi$ are transversal maps, or, in more generality, whenever their fibered product as manifolds exists and is ``good''  (in the sense that it is an embedded submanifold of $G\times G'$ with the expected tangent spaces  \cite{dH}), see \cite[App.~A]{bcdh}. 




If $G\toto M$ is a Lie groupoid and $\tau:N\to M$ is a smooth map, we can consider the fibered product of the morphisms $(s,t): G\to M\times M$ and $(\tau\times \tau): N\times N \to M\times M$. Whenever this results in a Lie groupoid, one refers to it as the {\bf pullback groupoid}, denoted by $\tau^*G\toto N$. This happens for instance when $\tau$ is {transverse to the orbits}, or when $N\subseteq M$ is either an open or {an invariant} submanifold (i.e., a union of orbits), in which case the pullback is called the {\bf restriction} $G_N\toto N$. Every full Lie subgroupoid is in fact a restriction.


Let $G\toto M$ be a Lie groupoid and $K$ be a Lie group acting on $M$ and $G$ in such a way that the structure maps are equivariant.
If the $K$-action on $M$ is principal (so that $M\to M/K$ is a principal $K$-bundle), then so is the $K$-action on $G$, in which case the quotient $G/K$ is a well-defined Lie groupoid over $M/K$, called the {\bf quotient groupoid} \cite[Lem.~5.9]{mmbook}.
The groupoid multiplication on the quotient relies on the identification of $G/K\times_{M/K} G/K$ with $G\times_M G/K\times K$. For example, the gauge groupoid of a principal $K$-bundle $P\to M$ is the quotient of the pair groupoid $P\times P\toto P$ by the diagonal $K$-action. 
More generally, one can consider quotients of a Lie groupoid with respect to the action of another Lie groupoid \cite{mmbook}, see the next section.

Other important constructions, such as blow-ups and deformations to the normal cone, are discussed e.g. in \cite{desk}. For further connections with classical differential geometry, see \cite{fest}.



\section{Actions and representations}
\label{section:actions}


Let $G\toto M$ be a Lie groupoid, $\mu: S\to M$ a smooth map,
and write $G\times_M S$ for the fibered product over $s$ and $\mu$. A {\bf left action} $$
(G\toto M)\action (\mu:S\to M)
$$ 
is given by a smooth map $\rho:G\times_M S\to S$,
$(g,y)\mapsto g y = \rho_g(y)$, such that 
\begin{enumerate}[\ \ $\bullet$]
\item $\mu(\rho_g(y))= t(g)$, i.e., $\rho_g$ maps $\mu^{-1}(s(g))$ to $\mu^{-1}(t(g))$, 
\item $\rho_{1_x}=\id_{\mu^{-1}(x)}$, for every $x \in M$, 
\item $\rho_{hg}=\rho_h\rho_g$, for every $h,g$ composable. 
\end{enumerate}
The map $\mu$ is often called the {\bf moment map}. Note that when $\mu$ is a submersion, it defines a family of manifolds $\{S_x\}_{x\in M}$ parameterized by $M$, and the action realizes the arrows of $G$ as symmetries of this family. An action of $G\toto M$ on $\mu: S\to M$ also induces a natural action of the group of bisections ${\rm Bis}(G)$ on $S$.
Right actions are defined analogously. Any Lie groupoid $G\toto M$ acts on itself by left multiplication with moment map $\mu=t$, and it acts on $M$ with moment map $\mathrm{Id}_M:M \to M$ via $t: G\times_M M=G \to M$. There are analogous actions induced by right multiplication.

If $\mu: E\to M$ is a vector bundle, an action is called a {\bf representation} if $\rho_g: E_{s(g)}\to E_{t(g)}$ is linear for every
$g\in G$.
A representation can be equivalently described as a morphism 
$$
(G\toto M)\to(GL(E)\toto M)
$$ 
into the general linear groupoid of $E$ which is the identity map on objects.



\begin{examples}
\begin{enumerate}[(i)]
\item 
Actions and representa\-ti\-ons of a Lie group, viewed as a Lie groupoid with a single object, are the usual ones. 
\item An action of a manifold $M\toto M$ is just a 
smooth map into $M$, while a representation is just a vector bundle $E\to M$.
\item 
A representation of the pair groupoid 
$$
(M\times M\toto M)\action (E\to M)
$$ 
is the same as a trivialization of $E$, and a representation of the fundamental groupoid $\Pi_1(M)$ on $E$ is the same as a flat connection $\nabla$. In fact, by van Kampen's theorem, $\Pi_1(M)$ is the colimit of the pair groupoids of open charts, from where a representation is the same as locally consistent trivializations.

\item Actions of an action groupoid $K\ltimes M \toto M$ are the same as $K$-equivariant maps $S\to M$. Representations of $K\ltimes M$
are the same as $K$-equivariant vector bundles over $M$.

\item Representations of submersion groupoids are the same as {\em descent datum} for vector bundles. More explicitly, let $G=M\times_N M \toto M$ be the submersion groupoid of $\tau: M\to N$. For any vector bundle $E'\to N$,  
the pullback $\tau^*E' \to M$ carries a natural representation of $G$, and any representation of $G$ turns out to be of this form. Indeed, a representation of $G$ on $E\to M$ defines an equivalence relation on $E$ whose quotient is a vector bundle $E'\to N$ such that $\tau^*E'$ is identified with $E$ (as $G$-representations), see \cite[Thm.~2.1.2]{macbook}. 
 When $M=\coprod_i U_i \to N$ is a covering, a representation of the Cech groupoid \eqref{eq:cech} is the same as a {\em cocycle}, that allows vector bundles $E_i\to U_i$ to be ``glued'' into a vector bundle $E'\to N$. 


\end{enumerate}
\end{examples}


Extending Example~\ref{ex:action-gpd}, given a left action of $G\toto M$ on $\mu: S\to M$, one defines the corresponding {\bf action groupoid} 
$$
G\ltimes S = (G\times_M S \toto S)
$$ 
as before, with source and target maps given by $s(g,y) = y$ and $t(g,y) = gy$, and composition
$(h,gy)(g,y)=(hg,y)$.
The action groupoid of a right action is defined analogously. 

A {\bf fibration} over $G\toto M$ is a Lie groupoid morphism
$$
\phi:(G'\toto M')\to (G\toto M)
$$ 
such that $\phi_0:M'\to M$ and $\hat\phi=(\phi_1,s'):G'\to G\times_M M'$ are surjective submersions. A fibration $\phi$ has trivial kernel (i.e., only consisting of units) if and only if $\hat\phi$ is a diffeomorphism. Note that, for an action of $G\toto M$ on $\mu: S\to M$, 
the canonical projection $G\ltimes S \to G$ is a fibration (with trivial kernel) whenever $\mu$ is a surjective submersion.

An important type of fibration is given as follows.
A {\bf VB-groupoid} is a morphism
$$
\phi:(\Gamma\toto E)\to (G\toto M)
$$
such that $\phi_1: \Gamma\to G$ and $\phi_0: E\to M$ are vector bundles and the structure maps of $\Gamma$ are vector bundle morphisms. Every VB-groupoid is a fibration. The tangent and cotangent groupoids of Example~\ref{ex:tancot} are examples of VB-groupoids.
The {\bf core} of a VB-groupoid $\phi:\Gamma\to G$ is the vector bundle $C = \ker(s_{\Gamma}|_M: \Gamma|_M\to E)$, and it fits into the {\em core exact sequence} 
$$
0\to t^*C \to \Gamma \stackrel{\hat{\phi}}{\to} s^*E \to 0.
$$
So a VB-groupoid has trivial kernel (as a fibration) if and only if the core bundle is trivial.

The next result relates actions to fibrations, and representations to VB-groupoids. 


\begin{proposition}\label{prop:rep}
(a) The action groupoid construction gives a bijective correspondence between $G$-actions on surjective submersions and fibrations over $G\toto M$ with trivial kernel.
(b) Under this bijection, $G$-representations corres\-pond to VB-groupoids over $G$ with  trivial core. 
\end{proposition}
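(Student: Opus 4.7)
The plan is to construct explicit inverses in both directions, relying on the characterization of a fibration with trivial kernel as one for which $\hat\phi: G' \to G\times_M M'$ is a diffeomorphism.

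For part (a), the forward map sends an action $\rho$ on $\mu:S\to M$ to the projection $p:G\ltimes S\to G$, which is a fibration since $\mu$ is a surjective submersion, and has trivial kernel since $\hat p$ is literally the identity on $G\times_M S$. For the inverse, given a fibration $\phi:G'\to G$ with trivial kernel, I set $\mu := \phi_0:M'\to M$ (a surjective submersion by hypothesis) and define
$$
\rho_g(y) := t'\bigl(\hat\phi^{-1}(g,y)\bigr),
$$
where $t'$ is the target of $G'$. Smoothness is immediate; the unit axiom follows from $\hat\phi(1'_y)=(1_{\mu(y)},y)$, and the associativity $\rho_{hg}=\rho_h\rho_g$ translates directly into the compatibility of $\hat\phi^{-1}$ with the multiplications, which uses that $\phi_1$ is a groupoid morphism.

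That the two constructions are mutually inverse is then a direct verification. Starting from an action and passing to $G\ltimes S$, the map $\hat p$ is the identity, so the original $\rho$ is recovered on the nose. Starting from a fibration, the map $\hat\phi^{-1}:G\ltimes S\to G'$ provides the required groupoid isomorphism over $G$: source and target compatibility are built into the definition of $\rho$, and compatibility with multiplication is precisely the associativity of $\rho$ read backwards.

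For part (b), if $E\to M$ is a $G$-representation, then $\Gamma := G\ltimes E = G\times_M E$ carries a natural vector bundle structure over $G$ (pulling $E$ back along $s$), and fiberwise linearity of each $\rho_g$ makes the source, target, and multiplication vector bundle morphisms; its core $\ker(s_\Gamma|_M)$ is trivial since the source sends $(1_x,v)\mapsto v$. Conversely, a VB-groupoid with trivial core has its core exact sequence degenerate into a vector bundle isomorphism $\hat\phi:\Gamma \xrightarrow{\sim} s^*E$, so the action produced by part (a) restricts on each fiber to the composition $t_\Gamma\circ\hat\phi^{-1}|_{\{g\}\times E_{s(g)}}$ of vector bundle maps, hence is linear. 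The main obstacle is bookkeeping rather than any single deep step: in (a) one must carefully check that $t'\circ\hat\phi^{-1}$ satisfies associativity, and in (b) one must confirm that the linear structure on $E$ for which $\rho$ is a representation is exactly the one inherited from the VB-groupoid structure of $\Gamma\to G$, which is ultimately a matter of tracing through the compatibility axioms of a VB-groupoid.
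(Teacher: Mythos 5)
Your proof is correct and follows exactly the route the paper sets up just before the statement: the characterization of trivial-kernel fibrations by $\hat\phi$ being a diffeomorphism, and the degeneration of the core exact sequence $0\to t^*C\to\Gamma\to s^*E\to 0$ into an isomorphism when the core vanishes. The paper (a survey) states the proposition without proof, but your construction of the inverse via $\rho_g(y)=t'(\hat\phi^{-1}(g,y))$ and the verifications of the unit, associativity, and fiberwise-linearity conditions are precisely the intended argument.
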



As indicated by the previous result, fibrations can be thought of as generalized actions, and VB-groupoids as generalized representations. We will make the latter idea more precise, following \cite{arcr,gsm}.


Given  a Lie groupoid $G\toto M$ and a graded vector bundle $E=E_1\oplus E_0\to M$, a (2-term) {\bf representation up to homotopy} $G\action E$ consists of 
\begin{enumerate}[\ \ $\bullet$]
    \item a linear map $\partial: E_1\to E_0$ over $\mathrm{Id}_M$,
    \item a chain map $\rho_g:(E,\partial)|_x\to (E,\partial)|_y$ for each $y\xfrom g x$, and
    \item a chain homotopy $\gamma_{h,g}:\rho_{hg}\to\rho_h\rho_g$ for each pair of composable arrows $h,g$,
\end{enumerate}   
so that $\rho_{1_x}=\id_{E_x}$ for every $x \in M$, $\gamma_{h,g}=0$ whenever $h$ or $g$ is a unit, and the ``Bianchi-type'' identity 
$\rho_k\gamma_{h,g}-\gamma_{k,h}\rho_g=\gamma_{kh,g}-\gamma_{k,hg}$ holds for every triple of composable arrows. 
When $E_1=0$ this recovers the usual notion of representation. 

Every representation up to homotopy $G\action E$ yields a VB-groupoid $t^*E_1\oplus s^*E_0 \toto E_0$ via the so-called {\em Grothendieck construction}, a generalization of the construction of action groupoids. 
Conversely, given a VB-groupoid $\phi:(\Gamma\toto E)\to(G\toto M)$, 
by choosing a liner section $\sigma$ of $\hat\phi:\Gamma\to s^*E$, known as a {\em cleavage}, we obtain a representation up to homotopy on $C\oplus E$, with $\partial$ given by the {\em core complex} 
$$t_{\Gamma}:C\to E.$$
Two different cleavages give rise to isomorphic representations up to homotopy. This leads to the following extension of Prop.~\ref{prop:rep} (b)  \cite{gsm}  (see also \cite{dho}). 


\begin{theorem}\label{thm:ruthVB}
The Grothendieck construction sets a one-to-one correspondence between (isomorphism classes of) representations up to homotopy $G\action E_1\oplus E_0$ and VB-groupoid $(\Gamma\toto E_0)\to (G\toto M)$ with core $E_1$.     
\end{theorem}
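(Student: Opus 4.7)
The plan is to prove the correspondence by constructing the two maps explicitly and then checking they are mutually inverse up to the natural notion of isomorphism of representations up to homotopy. I will use Proposition \ref{prop:rep}(b) as a guide: the case where $E_1$ vanishes (or more precisely where $\partial$ and $\gamma$ are zero) should recover the strict bijection there.

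First I would spell out the \emph{Grothendieck construction}. Given $G\action E_1\oplus E_0$ with data $(\partial,\rho,\gamma)$, I set $\Gamma := t^*E_1\oplus s^*E_0$, thought of as having elements $(g,\xi,e)$ with $\xi\in (E_1)_{t(g)}$ and $e\in (E_0)_{s(g)}$. I define $s_\Gamma(g,\xi,e) = e$, $t_\Gamma(g,\xi,e) = \rho_g(e)+\partial\xi$, the unit $1_e = (1_x,0,e)$ for $e\in (E_0)_x$, and multiplication
\[
(h,\eta,\rho_g(e)+\partial\xi)\cdot(g,\xi,e) = (hg,\,\eta+\rho_h\xi+\gamma_{h,g}(e),\,e).
\]
I would then verify: (i) $s_\Gamma, t_\Gamma$ are submersions and the unit and source are linear over $E_0$; (ii) the multiplication is well-defined, using that $\rho_h$ is a chain map so $\partial(\rho_h\xi+\gamma_{h,g}(e)) = \rho_h\partial\xi + \rho_h\rho_g(e)-\rho_{hg}(e)$; (iii) associativity, which is precisely the content of the Bianchi-type identity for $\gamma$; (iv) the inverse exists and is smooth (it is forced, and one reads off $\rho_{g^{-1}}$ and $\gamma_{g^{-1},g}$ appropriately). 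Finally, by construction $\phi_1(g,\xi,e)=g$ is a VB-groupoid fibration, and the core $\ker(s_\Gamma|_M)$ is $\{(1_x,\xi,0)\} \cong E_1$.

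Second, I would do the inverse construction. Given a VB-groupoid $\phi:\Gamma\to G$ with core $C=E_1$ and a cleavage $\sigma:s^*E_0\to\Gamma$ (a linear splitting of $\hat\phi$ with $\sigma\circ u = u_\Gamma$), I define $\partial = t_\Gamma|_C:E_1\to E_0$, $\rho_g(e) := t_\Gamma(\sigma(g,e))$, and
\[
\gamma_{h,g}(e) := \sigma(h,\rho_g(e))\cdot\sigma(g,e)\;-\;\sigma(hg,e),
\]
where the subtraction takes place in the vector bundle fiber $\Gamma_{hg}$; one checks this difference lies in the core $t^*C$ over $hg$, giving $\gamma_{h,g}(e)\in (E_1)_{t(hg)}$. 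The linearity of all structure maps ensures $\rho_g$ and $\gamma_{h,g}$ are linear in $e$, and unitality of $\sigma$ on units forces $\gamma_{h,g}=0$ when $h$ or $g$ is a unit. The chain map identity $\rho_g\partial = \partial\rho_g$ follows from matching source/target decompositions in the core exact sequence, and the Bianchi identity is the associativity of the multiplication in $\Gamma$ applied to $\sigma(k,\cdot)\cdot\sigma(h,\cdot)\cdot\sigma(g,\cdot)$, reassociated in two different ways.

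Third, I would establish cleavage-independence and the inverse property. Two cleavages $\sigma,\sigma'$ differ by a core-valued bundle map $\sigma'-\sigma = \alpha:s^*E_0\to t^*C$, i.e.\ a map $\alpha_g:(E_0)_{s(g)}\to (E_1)_{t(g)}$; tracing through the formulas shows this induces precisely an isomorphism of representations up to homotopy in the sense of \cite{arcr,gsm}, namely a degree-$-1$ map intertwining the two sets of structure operators. That the two constructions are inverse is then a direct check: starting from $(\partial,\rho,\gamma)$, the canonical cleavage $\sigma(g,e)=(g,0,e)$ on the Grothendieck groupoid returns the original data; starting from $(\Gamma,\sigma)$, the Grothendieck groupoid built from the recovered $(\partial,\rho,\gamma)$ is isomorphic to $\Gamma$ via $(g,\xi,e)\mapsto \sigma(g,e) + \xi$, where we use the core exact sequence to add $\xi\in t^*C$.

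The main obstacle I expect is bookkeeping: showing that the Bianchi identity for $\gamma$ is exactly the cocycle equation extracted from associativity in $\Gamma$, and that the notion of isomorphism induced by a change of cleavage matches the standard one for representations up to homotopy. Once the dictionary between the core exact sequence and the pair $(E_1\xrightarrow{\partial}E_0,\rho,\gamma)$ is set up carefully, everything reduces to linear-algebraic identities living over composable tuples $G^{(k)}$ for $k\le 3$.
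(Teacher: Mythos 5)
Your proposal is correct and follows essentially the same route as the paper (which attributes the result to Gracia-Saz--Mehta): the Grothendieck construction $t^*E_1\oplus s^*E_0\toto E_0$ in one direction, a unital cleavage $\sigma$ of $\hat\phi$ producing $(\partial=t_\Gamma|_C,\rho,\gamma)$ in the other, and cleavage-independence up to isomorphism of representations up to homotopy. The formulas and consistency checks (chain-map condition from the target compatibility, Bianchi identity from associativity) are the standard ones; the only detail left implicit is the explicit formula for the induced action of $\rho_g$ on the core $E_1$, which you would need to spell out to verify the chain-map condition in the converse direction.
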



A central aspect of VB-groupoids is their duality theory.
Every VB-groupoid $\Gamma\toto E$ over $G\toto M$ has a {\bf dual VB-groupoid} 
$$
(\Gamma^*\toto C^*) \to (G\toto M),
$$ 
where $C$ is the core of $\Gamma$. 
The core of the dual VB-groupoid is $E^*$.

\begin{example}
Following Example~\ref{ex:tancot},
the tangent groupoid $TG\to G$ is a VB-groupoid, with core $A_G\to M$. The 
choice of a linear section $s^*TM\to TG$ over $G$ gives rise to a representation up to homotopy on $A_G\oplus TM$, with core complex 
$$
A_G \stackrel{\rho}{\to} TM.
$$ 
This is the {\bf adjoint representation} of $G$ (well defined up to isomorphism). The cotangent grou\-poid $T^*G\toto A_G^*$ is the dual VB-groupoid and encodes the {\bf coadjoint representation}, with core complex $T^*M \stackrel{\rho^*}{\to} A_G^*$. 
\end{example}







We now discuss principal bundles in the realm of Lie groupoids. Given an action of $G\toto M$ on $\mu: S\to M$, the {\bf orbits} and {\bf orbit space} $S/G$ of the action
are defined as those of the corresponding action groupoid.
A (right) {\bf groupoid bundle} consists of a (right) groupoid action 
of $G$ on $S$ and a map $\tau: S\to N$ that is constant along the orbits (i.e., $\tau(yg)=\tau(y)$).
A groupoid bundle is called {\bf principal} if $\tau$ is a surjective submersion and the action is free and transitive along the $\tau$-fibers. 
In this case, 
$\tau$ induces a diffeomorphism $S/G\stackrel{\sim}{\to} N$, and the fibers of $\tau$ are diffeomorphic to $s$-fibers of $G$. Principal bundles for left actions are defined similarly. (When dealing with non-Hausdorff Lie groupoids, one may need to consider principal bundles over non-Hausdorff manifolds.) 

\begin{examples}
\begin{enumerate}[(i)]
\item When $G$ is a Lie group, we recover the classical notion of a principal $G$-bundle. 
%
\item 
Given $G\toto M$, the target map $t:G\to M$ is a $G$-principal bundle, where the action is by right multiplication; this is the {\it universal $G$-bundle}. 
%
\item 
If a morphism $\phi: G\to G'$ induces an isomorphism of Lie algebroids, then its kernel $K\toto M$ is an embedded group bundle with discrete fibers, which acts on $G$ by right multiplication making $G\to G'$ into a principal $K$-bundle. Conversely, if $G$ is a Lie groupoid and $K\subset G$ is an embedded group bundle with discrete fibers, then $G/K\toto M$ is a Lie groupoid and $G\to G/K$ yields an isomorphism of their Lie algebroids, see e.g. \cite{dhl}.
\end{enumerate}
\end{examples}

Let $P\stackrel{\tau}{\to} N$ be a (right) principal $G$-bundle with moment map $\mu: P\to M$ given by a submersion. Generalizing Example~\ref{ex:gauge}, the {\bf gauge groupoid} of $P$
is the quotient of the submersion groupoid 
$P\times_M P$ by the diagonal $G$-action, denoted by 
$$
P\times_G P := (P\times_M P)/G \toto N.
$$ 
(In fact $P\times_M P$ is a principal $G$-bundle over $P\times_G P$, see \cite[Lem.~5.35]{mmbook}.)
There is a canonical left action of $P\times_G P$ on $P$, with moment map $\tau: P\to N$, given by $[(p_1,p_2)] p = p_1g$, where $g\in G$ is the only arrow such that $p_2 g= p$.




Given Lie groupoids $G'\toto M'$ and $G\toto M$, a $(G',G)$-{\bf bibundle} consists of commuting left and right actions, 
$$\begin{matrix}
G' & \action & P & \curvearrowleft & G \\
\downdownarrows & \swarrow & & \searrow & \downdownarrows\\
M' & & & & M,
\end{matrix}
$$
such that the moment map for one action is invariant with respect to the other. 
Such a bibundle is {\bf principal} if both left and right bundles are principal; it is called left or right principal if so is the corresponding bundle.

In a principle bibundle $G'\action P \raction G$, there are canonical identifications of $G$ and $G'$ with gauge groupoids,
$$
G' \cong P\times_G P, \quad G\cong P\times_{G'} P,
$$
compatible with the respective actions on $P$.



\begin{remark}
There is a parallel theory of actions and representations for Lie algebroids, with fruitful interactions with the theory of Lie grou\-poids via differentiation and integration, see e.g. \cite{bcdh,macbook,momr}.
\end{remark}


\section{Multiplicative structures} \label{sec:mult}

In various contexts in differential geometry, Lie groupoids come equipped with geometrical structures suitably compatible with the groupoid multiplication, referred to as {\em multiplicative}. Some examples will be briefly discussed here.

The simplest multiplicative objects on a Lie groupoid $G\toto M$ are the {\bf multiplicative functions}, namely morphisms $f: G \to \R$, with $\R$ viewed as an abelian group, 
$$
f(hg)= f(h) + f(g).
$$

A vector field $X \in \mathfrak{X}(G)$ is {\bf multiplicative} if it defines a morphism $X: G\to TG$. This means that $X|_M$ is a vector field on $M$, $X$ projects onto $X|_M$ via $s$ and $t$, and 
$$
dm_{(h,g)}(X_h,X_g)=X_{hg}
$$ 
for every pair of composable arrows. Alternatively, viewed as a linear function  $T^*G\stackrel{X}{\to} \R$, it is a multiplicative function on the cotangent groupoid.
Multiplicative vector fields are infinitesimal counterparts of flows by groupoid automorphisms \cite{mx98}.

Just as the space of vector fields on a manifold forms a Lie algebra, multiplicative vector fields $\mathfrak{X}_{mult}(G)$ on a Lie groupoid fit into a (strict) {\em Lie 2-algebra}, given by the Lie algebra crossed module defined by the Lie algebra maps
\begin{enumerate}[\ \ $\bullet$]
\item $\Gamma(A_G)\to \mathfrak{X}_{mult}(G)$, $u\mapsto u^r-u^l$, and
\item $\mathfrak{X}_{mult}(G)\stackrel{D}{\to} \mathrm{Der}(\Gamma(A_G))$, $D_X(u)=[X, u^r]|_M$,
\end{enumerate}
where $u^r$ and $u^l$ denote the right- and left-invari\-ant vector fields defined by $u\in \Gamma(A_G)$, see \cite{el,ow}. More generally, multiplicative multivector fields on a Lie groupoid fit into a {\em graded Lie 2-algebra}, a categorification of the graded Lie algebra of multivector fields on a manifold defined by the Schouten bracket \cite{bclx}.

Given $G\toto M$ a Lie groupoid, a differential form $\omega\in\Omega^k(G)$ is {\bf multiplicative} if 
$$
m^*\omega=\pr_1^*\omega+\pr_2^*\omega\in \Omega^k(G\times_MG),
$$
where $G\times_MG$ is the manifold of pairs of composable arrows. If $\omega$ is multiplicative, then so is $d\omega$. Multiplicative 0-forms are multiplicative functions as defined before. Multiplicative 1-forms are the same as sections 
$G\to T^*G$ that are morphisms or, alternatively, linear functions on $TG$ that are multiplicative. A 2-form $\omega$ on $G$ is multiplicative if and only if $\omega^\flat: TG\to T^*G$, $\omega^\flat(X)=i_X\omega$, is a groupoid morphism.

The study of multiplicative 2-forms and bivector fields has been largely stimulated by Poisson geometry, especially with the advent of symplectic and Poisson groupoids \cite{cdw,we88}.

A {\bf symplectic groupoid}  is a Lie groupoid $G\toto M$ with a symplectic form $\omega\in\Omega^2(G)$ that is multiplicative. In this case, being multiplicative is equivalent to asking that the graph of the groupoid multiplication be a lagrangian submanifold of
$(G,\omega)\times (G,\omega)\times (G,-\omega)$. 
If $(G\toto M,\omega)$ is a symplectic groupoid, then $\dim(G) = 2 \dim(M)$, the submanifold of units $M\subseteq G$ is lagrangian, the inversion map is anti-symplectic, and source fibers are symplectic orthogonals to target fibers. The link with Poisson geometry comes from the fact that $M$ inherits a Poisson structure for which the target map is a Poisson map. Moreover, there is an identification of the Lie algebroid $A_G$ with the cotangent algebroid $T^*M$ defined by this Poisson structure. In this sense, a symplectic groupoid is always an integration of a Poisson manifold. Conversely, whenever the cotangent algebroid of a Poisson manifold is integrable, its source-simply-connected integration is a symplectic grou\-poid \cite{mx00}, see also \cite{catfel}.
\begin{example}
For a Lie groupoid $G\toto M$, the canonical symplectic structure on its cotangent groupoid $T^*G \toto A_G^*$ makes it into a symplectic groupoid. The corresponding Poisson structure on $A_G^*$ is the one dual to the Lie algebroid  $A_G$.
\end{example}

The theory of actions of Lie groupoids can be enhanced to symplectic groupoids.
Given a  symplectic groupoid $(G\toto M, \omega)$,  a {\bf hamiltonian $G$-space} is a symplectic manifold $(S,\omega)$ equipped with an action $\rho: G\times_M S\to S$ satisfying 
$$
\rho^*\omega_S = \pr_1^*\omega+\pr_2^*\omega_S.
$$ 
When $G$ is a Lie group, hamiltonian spaces for $T^*G\toto \g^*$ recover usual hamiltonian spaces in symplectic geometry.
Various aspects of the theory of hamiltonian actions carry over to actions of symplectic groupoids \cite{miwe}. 

Symplectic groupoids admit important generalizations, with motivations in mathematical physics. 
In one direction, {\em quasi-symplectic grou\-poids}  (aka {\em (twisted) presymplectic grou\-poids}) \cite{bcwz,xu} are Lie groupoids equipped with a multiplicative 2-form that may fail to be closed or nondegenerate in a way controlled by the groupoid structure, to be recalled in $\S$ \ref{sec:nerve}.
Extending the correspondence between symplectic groupoids and Poisson structures, quasi-sym\-plectic grou\-poids arise as integrations of {\em (twis\-ted) Dirac structures} \cite{bcwz}. Quasi-symplectic grou\-poids also arise in the context of {\em shifted symplectic structures} \cite{safr}, and their actions express important generalized notions of hamiltonian actions.

In another direction, {\bf Poisson grou\-poids} \cite{we88} are Lie groupoids $G\toto M$ equip\-ped with a Poisson structure $\pi \in \mathfrak{X}^2(G)$ that is multiplicative, in the sense that the contraction map $T^*G\to TG$ is a groupoid morphism.
Poisson groupoids are simultaneous generalizations of symplectic groupoids and Poisson-Lie groups. Their infinitesimal counterparts are {\em Lie bialgebroids} \cite{mx00}, which include Poisson manifolds and Lie bialgebras as special cases. The integration of Poisson groupoids (as Poisson manifolds) lead to {\em symplectic double groupoids}, see e.g. \cite{mac}.

A treatment of general multiplicative tensor fields on Lie groupoids and their infinitesimal counterparts can be found in \cite{budr}

\section{Proper Lie groupoids}


We briefly discuss linearization and rigidity properties of proper Lie groupoids.
A (Hausdorff) Lie groupoid $G\toto M$ is {\bf proper} if $(t,s): G\to M\times M$ is a proper map, or equivalently, if given $A, B \subset M$ compact sets, the arrows from $A$ to $B$ form a compact set. In a proper Lie groupoid, the isotropy groups $G_x$ are compact, the orbit space $M/G$ is Hausdorff, and the orbits $O\subset M$ are closed and embedded.
An {\bf $s$-proper} groupoid is a Lie groupoid $G\toto M$ whose source map is proper. Clearly, an $s$-proper Lie groupoid must be proper.


\begin{examples}
\begin{enumerate}[(i)]
\item A Lie group $K\toto *$ is pro\-per if and only if it is compact. A manifold $M\toto M$ is always a proper groupoid.
\item If $\tau: S\to M$ is a surjective submersion then the submersion groupoid $S\times_M S\toto S$ is proper. Conversely, if $G\toto M$ is a free, proper groupoid  then $M/G$ is a manifold such that $M\to M/G$ is a submersion, and
 $G\toto M$ is isomorphic to the corresponding submersion groupoid.
\item An action groupoid is proper if and only if so is the action. In particular, any action of a compact group yields a proper groupoid.
\item 
If a foliation $F\subseteq TM$ has compact leaves and Hausdorff orbit space, then its holonomy groups are finite \cite{epstein}, and by Reeb stability, the holonomy groupoid is $s$-proper. The converse also holds: if $\mathrm{Hol}(F)\toto M$ is $s$-proper, then the leaves are compact, and the orbit space is Hausdorff.
\end{enumerate}
\end{examples}


Let $G\toto M$ be a Lie groupoid, $O\subset M$ an orbit, and denote by $G_O\toto O$ the restriction of $G$ to the orbit. There is an inclusion 
$$
(TG_O\toto TO)\to(TG|_{G_O}\toto TM|_O)
$$
of VB-groupoids, and the quotient $NG_O\toto NO$, formed by the normal bundles, is a VB-groupoid called the {\bf linear model} of $G\toto M$ around $O$. The linear model has trivial core, hence (see Prop.~\ref{prop:rep}) it encodes a representation, called the {\bf normal representation} $(G_O\toto O)\action (NO\to O)$. 
The linear model still makes sense if we replace an orbit $O$ with any invariant submanifold $S\subset M$. The linearization problem, asking if there exists an identification of  the linear model with an honest neighborhood, has a positive answer for proper groupoids. 

\begin{theorem}[Weinstein-Zung Linearization]
Given $G\toto M$ a proper Lie groupoid  and $O\subset M$ an orbit (or invariant submanifold), there are open subsets $U \subset M$ and $V\subset NO$ containing $O$ 
and a Lie groupoid isomorphism
$$(G_U\toto U)\cong(G_V\toto V).
$$
If $G$ is $s$-proper, then $U$ and $V$ can be taken to be invariant.
\end{theorem}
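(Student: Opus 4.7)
The plan is to construct an explicit groupoid isomorphism between a neighborhood of $G_O$ in $G$ and a neighborhood of the zero section in $NG_O$, exploiting properness through averaging. First I would reduce to the case where the invariant submanifold is a single orbit $O$; the general invariant submanifold case follows by doing this uniformly along the orbits meeting a transversal to $S$. Since $G$ is proper, the restriction $G_O\toto O$ is also proper and each isotropy $G_x$ is compact.

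The first technical step is to produce a \emph{groupoid-compatible} Riemannian metric on $G$, that is, a metric for which $s$ and $t$ are Riemannian submersions and the inversion $i$ is an isometry. One starts with an arbitrary metric and averages over the isotropy action using the Haar system, which exists because of properness; this is the groupoid analogue of the classical averaging that produces invariant metrics for compact group actions. Using the associated exponential maps I would then construct embeddings of tubular neighborhoods: one of a neighborhood of $O$ in $M$ into $NO$, and one of a neighborhood of $G_O$ in $G$ into $NG_O$. Compatibility of the metric with the structure maps ensures that these exponentials intertwine, to first order along $G_O$, the source, target and unit maps with those of the linear model $NG_O\toto NO$.

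The main obstacle is upgrading this first-order compatibility to a genuine Lie groupoid isomorphism. Here I would employ a Moser-type deformation argument: interpolate between the candidate diffeomorphism $\phi_0$ and a true isomorphism by a smooth path $\phi_\tau$ of local diffeomorphisms, and integrate a time-dependent multiplicative vector field realizing the deformation. The nontrivial input is that at each time the infinitesimal obstruction, which is a priori only a vector field, can be chosen \emph{multiplicative} along $G_O$; this reduces to a vanishing statement in the deformation (equivalently, multiplicative) cohomology of a proper Lie groupoid, itself established by averaging against the Haar system as in the first step. This is the step where one genuinely uses properness (rather than just the compactness of individual isotropies) and it is where the argument is most delicate.

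Finally, for the $s$-proper case, the compactness of source fibers allows one to replace $U$ and $V$ by their $G$-saturations: starting from a small enough linearization one produced above, its saturation remains inside any preassigned open neighborhood of $O$, yielding the desired invariant open sets.
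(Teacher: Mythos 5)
The paper does not actually prove this theorem: being a survey, it only records three strategies in the paragraph following the statement --- the original reduction to a fixed point plus averaging \cite{we02,zung}, a Moser-type argument via multiplicative vector fields \cite{crst}, and compatible Riemannian metrics whose exponentials linearize \cite{dhfe}. Your proposal is a hybrid of the last two, which is a legitimate thing to attempt, but as written it has gaps precisely where those proofs do their real work. First, averaging a metric ``over the isotropy action'' does not produce a metric for which $s$ and $t$ are simultaneously Riemannian submersions; compactness of the individual isotropy groups is not the relevant input. What one needs is a metric on the manifold $G_2$ of composable pairs that is invariant under the symmetries permuting the three face maps, obtained by averaging with a Haar density over actions of the \emph{whole} proper groupoid, and then pushed down to $G$ and $M$. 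Moreover, once such a $2$-metric exists, the normal exponential maps of the saturated pair $(G_O, O)$ intertwine \emph{all} the structure maps exactly on a neighborhood, not merely to first order --- that is the content of \cite{dhfe} --- so in that route your ``main obstacle'' does not arise and no Moser step is needed. Conversely, in the Moser route no metric is needed, but your formulation of that step is circular: you propose to interpolate between $\phi_0$ and ``a true isomorphism,'' which is the object whose existence is in question. The correct formulation deforms the groupoid \emph{structure} (via the dilation family on a tubular neighborhood, degenerating $G$ near $O$ to its linear model) and then integrates a time-dependent multiplicative vector field obtained by averaging a primitive of the resulting deformation cocycle; the vanishing theorem for the deformation cohomology of proper groupoids is what makes this primitive exist.

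Two further steps are asserted rather than argued. The reduction of the invariant-submanifold case to the single-orbit case ``done uniformly along the orbits meeting a transversal'' is not a proof: orbit-by-orbit linearizations have no reason to patch into a single isomorphism on a neighborhood of $S$, and controlling this uniformity is exactly the difficulty; both \cite{crst} and \cite{dhfe} avoid it by treating saturated embedded submanifolds directly. For the $s$-proper refinement, it is not enough that the saturation of a small $U$ stays small (which does follow from closedness of the quotient map): one must also show that the linearizing isomorphism, initially defined only near $O$, extends to an isomorphism between the saturations, which requires a separate argument using that a groupoid morphism defined on an invariant open set and invertible near an orbit is invertible on the whole saturation.
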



The previous theorem extends several classical linearization results, e.g.
Ehresmann’s theorem (for proper submersions), the tube theorem (for actions of compact Lie groups), and Reeb stability (for compact, Hausdorff foliations). 


The original approach to the linearization theorem reduces the problem to the fixed point case by considering a small transverse to the orbit \cite{we02}, and uses averaging arguments to prove linearization around  a fixed point \cite{zung}. Subsequent proofs use averaging arguments to either obtain a multiplicative vector field allowing a Moser trick \cite{crst}, or construct  compatible Riemannian structures on $G$ and $M$ so that the linearization is achieved by the exponential maps \cite{dhfe}.



A consequence of the linearization result is the rigidity of compact Lie groupoids \cite{crmst,dhf2}. A {\bf deformation} of a Lie groupoid $G\toto M$    consists of  deformations $(s_\epsilon,t_\epsilon,m_\epsilon)$ of the source, target, and multiplication maps so that at each time $\epsilon$ they define a Lie groupoid, and two deformations are equivalent if they differ by a groupoid-theoretic isotopy. 

\begin{corollary}
{A compact Lie groupoid $G\toto M$ is {\em rigid}, i.e., any of its deformations is equivalent to the constant deformation.}    
\end{corollary}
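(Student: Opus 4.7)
The plan is to deduce rigidity from the Weinstein--Zung linearization theorem for $s$-proper Lie groupoids, applied to a suitable ``deformation groupoid'' that packages the whole family at once.

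First, assemble the deformation $(s_\epsilon,t_\epsilon,m_\epsilon)_{\epsilon\in I}$ of $G\toto M$ into a single Lie groupoid
$$\tilde G \;=\; I\times G \;\toto\; I\times M,$$
whose source, target and multiplication act as $s_\epsilon,t_\epsilon,m_\epsilon$ on the slice over $\epsilon\in I$ and as the identity on the $I$-factor. Smoothness of the deformation guarantees that this is a genuine Lie groupoid. Since $G$ is compact, $\tilde G$ is Hausdorff and $s$-proper: preimages under $\tilde s$ of compact subsets of $I\times M$ sit inside sets of the form $\pi_I(K)\times G$ as closed subsets, hence are compact. Moreover, because $\tilde s$ and $\tilde t$ preserve the $I$-coordinate, each slice $\{\epsilon_0\}\times M$ is an invariant submanifold of the object space.

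Next, fix $\epsilon_0\in I$ and apply the $s$-proper version of the linearization theorem around the invariant submanifold $\{\epsilon_0\}\times M\subset I\times M$. Its normal bundle is the trivial rank-one bundle $\R\times M$ along the $\epsilon$-direction, and since $\tilde s,\tilde t$ act as the identity in this direction their linearizations do too. Hence the linear model groupoid $N\tilde G_{\{\epsilon_0\}\times M}\toto N(\{\epsilon_0\}\times M)$ is canonically isomorphic to the \emph{constant deformation} $G_{\epsilon_0}\times\R\toto M\times\R$. Invariant open neighborhoods of a slice necessarily have the form $I'\times M$ for some open $I'\ni\epsilon_0$, so linearization produces a Lie groupoid isomorphism between the restriction of $\tilde G$ to an open $\epsilon$-interval around $\epsilon_0$ and the constant deformation of $G_{\epsilon_0}$ on another such interval. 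In particular, a neighborhood of each $\epsilon_0$ carries an equivalence with the constant deformation.

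Finally, globalize. Working over a compact parameter interval, cover it by finitely many trivializing neighborhoods supplied by the previous step; differentiate each local trivialization to a time-dependent multiplicative vector field on $G$; glue the vector fields using a partition of unity on $I$; and integrate the result. Compactness of $G$ ensures that the resulting time-dependent multiplicative vector field is complete, producing the sought smooth family of groupoid automorphisms trivializing the deformation. This patching-plus-integration step is a Moser-type argument in the spirit of \cite{crst}.

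The main obstacle is the middle step: the isomorphism produced by linearization is not automatically $\epsilon$-preserving, so to extract an honest \emph{isotopy} rather than just an abstract Lie groupoid isomorphism over a varying base, one must reparametrize or adjust it to cover the identity on the $I$-factor (equivalently, to respect the map $\tilde G\to I$). Once this compatibility is secured, the gluing and Moser integration in the final step are essentially formal.
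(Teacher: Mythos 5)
Your overall strategy is precisely the one the paper intends: the corollary is stated as a consequence of the linearization theorem with no argument given beyond the citations to \cite{crmst,dhf2}, and your route --- assembling the deformation into a groupoid $I\times G\toto I\times M$, observing it is Hausdorff and $s$-proper because $G$ is compact, noting each slice $\{\epsilon_0\}\times M$ is invariant, and identifying the linear model with the constant deformation because the normal representation on the trivial line bundle in the $\epsilon$-direction is trivial --- is exactly the derivation in \cite{dhf2}. One small correction: an invariant open neighborhood of a slice need \emph{not} have the form $I'\times M$ (orbits sit inside slices, so an invariant open set may meet nearby slices in proper open subsets); what you actually need, and what is true, is that $M$ is compact (being the continuous image of $G$ under $s$), so every open neighborhood of the slice contains one of the form $I'\times M$, and sets of this form are automatically invariant, so you may shrink to one.

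The genuine gap is the one you flag yourself in the final paragraph, and it must be closed rather than acknowledged: the linearization isomorphism $\Psi$ between $\tilde G_{I'\times M}$ and an open piece of the constant model $G_{\epsilon_0}\times \R$ has no reason to intertwine the projections to the parameter, so it is not yet an isotopy, and ``differentiating the local trivialization'' does not directly yield a time-dependent multiplicative vector field on the family. The standard repair is as follows. Push the multiplicative vector field $\partial_\epsilon$ of the constant model through $\Psi^{-1}$ to obtain a multiplicative vector field $X$ on $\tilde G_{I'\times M}$, and let $f=d\epsilon(X)$ be its $\epsilon$-component. Since $\epsilon\circ s=\epsilon\circ t=\epsilon$ and $X$ projects via $s$ and $t$, one has $f(g)=f(s(g))=f(t(g))$, so $f$ takes a common value on $h$, $g$ and $hg$ for any composable pair; consequently $(1/f)X$ is again multiplicative wherever $f\neq 0$. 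Because $\Psi$ is the identity to first order along the central slice, $f=1$ there, so after shrinking $I'$ the rescaled field is a multiplicative vector field whose $\epsilon$-component is identically $1$; its flow carries the slice at $\epsilon_0$ to the slice at $\epsilon_0+\epsilon$ by groupoid isomorphisms, which is exactly the required local isotopy. With this in hand, your final gluing-and-completeness step over a compact parameter interval goes through as stated and is the Moser-type argument of \cite{crst,crmst}.
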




Averaging arguments play a key role in 
the linearization and rigidity results and rely on the notion of {\em Haar systems}.  Recall that a {\bf density} on a vector bundle $E\to M$ is a map $\sigma:\mathrm{Fr}(E)\to\R$ such that $\sigma(A\beta)=|\det(A)|\sigma(\beta)$ for every frame $\beta\in \mathrm{Fr}(E)_x$. Densities define a trivial line bundle;
when $E$ is orientable, densities correspond to sections of $\wedge^{\rm top}E^*\to M$. Given a map $f:N\to M$ and a density $\sigma$ on $E\to M$, there is a {\bf pullback density}  $f^*\sigma$ on $f^*E\to N$. A density $\sigma$ on a manifold $M$ is a density on its tangent bundle, and it is {\bf normalized} if $\int_M \sigma=1$. 

\begin{proposition}
A proper Lie groupoid $G\toto M$ admits a density $\sigma$ on $A_G\to M$ such that for every $x\in M$ its pullback $\sigma^x$ on $T (s^{-1}(x))=t^*A$ is normalized and has compact support.
\end{proposition}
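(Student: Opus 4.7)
I would construct $\sigma$ in three stages: a reference density, a fiberwise cutoff, and a normalization.

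Stage 1: Pick any smooth, everywhere-positive density $\mu$ on $A_G\to M$. Such a density exists because $M$ is paracompact: a partition-of-unity argument produces, say, a fiberwise Riemannian metric on $A_G$, whose volume density gives $\mu$. The identification $T s^{-1}(x)\cong t^*A_G|_{s^{-1}(x)}$ from right-invariance turns $\mu$ into a smooth, everywhere-positive density $\mu^x$ on each source fiber $s^{-1}(x)$. In general $\mu^x$ need not be compactly supported or have finite total mass.

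Stage 2: Build a smooth function $c:M\to[0,\infty)$ enjoying
\begin{enumerate}[\ \ (i)]
\item for every $x\in M$, the restriction of $c\circ t$ to $s^{-1}(x)$ has compact support;
\item $c>0$ everywhere on $M$.
\end{enumerate}
For (i) the key input is properness of $(t,s):G\to M\times M$: if $K\subset M$ is compact, then
$t^{-1}(K)\cap s^{-1}(x)=(t,s)^{-1}(K\times\{x\})$
is compact, so any $c$ with compact support in $M$ satisfies (i). For (ii) a single compactly supported function will not work when the orbit space is non-compact. Instead, using that $M$ is paracompact and (since $G$ is proper) the orbit space $M/G$ is Hausdorff and locally compact, I would assemble $c$ as a locally finite sum $c=\sum_i\phi_i$ of compactly supported nonnegative smooth functions on $M$ whose supports cover $M$ and are arranged so that each orbit meets only finitely many of the $\mathrm{supp}(\phi_i)$. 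Local finiteness with respect to orbits, together with the properness estimate above, keeps property (i) intact for the sum.

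Stage 3: Define $N:M\to\R$ by
$$
N(x):=\int_{s^{-1}(x)} c(t(g))\,\mu^x(g).
$$
Property (i) makes the integrand compactly supported in the fiber, so fiberwise integration yields a smooth function $N$; property (ii) together with positivity of $\mu^x$ gives $N(x)>0$ for all $x$. Set
$$
\sigma:=\frac{c}{N}\,\mu.
$$
This is a smooth positive density on $A_G$, and its pullback to $s^{-1}(x)$ is $\sigma^x=\tfrac{1}{N(x)}\,(c\circ t)\,\mu^x$, which inherits compact support from (i) and integrates to $1$ by construction.

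The main obstacle is Stage 2: producing a smooth globally positive cutoff $c$ whose fiberwise pullback is still compactly supported. The subtlety is that $M/G$ is only a topological space, so one cannot directly pull back a smooth partition of unity from $M/G$; instead, one must mix paracompactness of $M$ with properness of $(t,s)$ to arrange a locally finite family of smooth bumps on $M$ whose $G$-saturations form a locally finite cover of $M/G$. Once this classical ``Haar-type'' construction is in place, Stages 1 and 3 are routine.
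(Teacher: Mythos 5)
Your three-stage strategy (reference density, fiberwise cutoff, normalization) is the standard construction of a Haar density, and the paper states this proposition without proof, so Stages 1 and 3 are the right skeleton. However, Stage 2 contains a genuine error: conditions (i) and (ii) are incompatible whenever some source fiber is non-compact. If $c>0$ on all of $M$, then $c\circ t>0$ on all of $G$, so $\mathrm{supp}(c\circ t)\cap s^{-1}(x)=s^{-1}(x)$, which already fails to be compact for the pair groupoid $\R\times\R\toto\R$ (a proper groupoid). The same contradiction is visible inside your proposed construction: compactly supported bumps whose supports cover $M$ cannot meet a non-compact orbit only finitely often, since finitely many compact sets cannot cover it. So the function $c$ you ask for does not exist in general, and the argument breaks at this point rather than at a technicality.

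The repair is to weaken (ii): you only need $N(x)>0$ for all $x$, i.e.\ $c\geq 0$ compactly supported ``transversally'' and strictly positive somewhere on every orbit. Since the quotient map $q:M\to M/G$ is open and $M/G$ is locally compact, Hausdorff and second countable (hence paracompact), one can choose compactly supported $\phi_i\geq 0$ on $M$ so that the open sets $q(\{\phi_i>0\})$ cover $M/G$ and the compact sets $q(\mathrm{supp}\,\phi_i)$ form a locally finite family in $M/G$; then $c=\sum_i\phi_i$ satisfies (i) by your properness argument (each orbit meets only finitely many supports) and meets every orbit in a point where it is positive, so $N>0$. One further step in Stage 3 needs to be made explicit: the pullback of $\sigma=\tfrac{c}{N}\mu$ to $s^{-1}(x)$ at $g$ is $\tfrac{c(t(g))}{N(t(g))}\mu^x(g)$, not $\tfrac{c(t(g))}{N(x)}\mu^x(g)$ a priori. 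Your normalization works only because $N$ is constant along orbits, which follows from right-invariance of the family $\{\mu^x\}$: the diffeomorphism $R_h:s^{-1}(t(h))\to s^{-1}(s(h))$ preserves $c\circ t$ and carries $\mu^{t(h)}$ to $\mu^{s(h)}$, hence $N(t(h))=N(s(h))$. Without this invariance, dividing by a function on the base would not normalize every fiber simultaneously, so it deserves a line of proof.
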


Such a density $\sigma$ is called a {\bf Haar density} on $G\toto M$, and the family $\{\sigma^x\}$ is called a {\bf Haar system}. 

\begin{examples}
\begin{enumerate}[(i)]
    \item A Haar density on a compact Lie group is a usual (right-invariant) Haar  measure, and the only Haar density on a manifold $M\toto M$ is 1.
    \item A Haar density on a submersion groupoid is a vertical density, inducing integration along the fibers. In the particular case of the Cech groupoid of an open cover $\U$ of $M$, this is the same as a partition of unity subordinated to $\U$.
\end{enumerate}
\end{examples}


A noteworthy application of Haar densities is the proof that any proper Lie groupoid admits a compatible real analytic structure \cite{martinez}, which can be regarded as a variant of Hilbert's fifth problem. In the next sections, we will discuss some further applications of Haar densities, such as the Morita invariance of (homotopy) representations, and vanishing theorems for cohomology.




\section{Morita equivalence} \label{sec:morita}



From a differential geometric standpoint, {\em Mo\-rita equivalence} is a way of identifying Lie grou\-poids with the same ``transverse geometry''.


A Lie groupoid morphism $\phi:(G'\toto M')\to (G\toto M)$ is a {\bf Mo\-rita morphism} if 
\begin{enumerate}[\ M1)]
\item it induces a homeomorphism $M'/G'\to M/G$ between the orbit spaces,
\item it gives an isomorphism $G'_{x'}\to G_{\phi(x')}$ between the isotropy groups for every $x'$, 
\item $d\phi: N_{x'}O\to N_{\phi(x')}O$ is a linear isomorphism between the normal spaces for every $x'$.
\end{enumerate}


%
If $\phi$ is a groupoid isomorphism, or more generally, if $\phi$ is invertible up to homotopy, then $\phi$ is Morita, but the converse does not hold. 
A fibration $\phi$ is Morita if and only if its kernel is identified with the submersion groupoid of $\phi_0$.
%


\begin{examples}
    \begin{enumerate}[(i)]
\item 
Given $\tau: M'\to M$ a surjective submersion, the projection 
$$(M'\times_M M'\toto M')\to (M\toto M)$$ 
from the submersion groupoid is a Morita fibration, and it is invertible up to homotopy if and only if $\tau$ admits a global section.
%
\item Given $G\toto M$ a transitive Lie groupoid and $x\in M$, the inclusion of the isotropy
$$(G_x\toto \{x\})\to (G\toto M)$$ is Morita, and it is invertible up to homotopy if and only if the associated principal $G$-bundle $P\to M$ is trivial.
%
\item 
Given $Q$ an orbifold and $\U$, $\U'$  orbifold atlases, a {\em refinement map} $\lambda:\U\to\U'$, given by a collection of orbifold chart embeddings, 
yields a morphism $\lambda_*:G_\U\to G_{\U'}$ between the Cech groupoids (Example \ref{ex:orbifold}) that is Morita.
\end{enumerate}
\end{examples}


Morita morphisms can be equivalently descri\-bed as fully faithful and essentially surjective maps \cite[$\S$ 4]{dH}, also known as {\em weak equivalences} \cite[$\S$ 5.4]{mmbook}, 

Two Lie groupoids $G, G'$ are said to be {\bf Morita equivalent} if there are Morita morphisms 
$$
G\from \tilde G\to G'.
$$ 
Morita equivalence is clearly a reflexive and symmetric relation.
To see that it is transitive, and hence an equivalence relation, it is convenient to work with {\em homotopy fibered products} \cite{mmbook,dH}.

Given Lie groupoid morphisms $\phi:G\to H$ and $\psi:G'\to H$, 
their {\bf homotopy fibered product} $G \tilde{\times}_H G'$ is the (set-theoretic) groupoid where an object is a triple 
$$
(x,\phi(x)\xfrom h\psi(x'),x')\in M\times H\times M'
$$
and an arrow from $(x, h, x')$ to $(y, h', y')$ is a pair $(y\xfrom g x,y'\xfrom{g'}x')\in G\times G'$ inducing a commuta\-tive square in $H$:
$$
 \phi(g) h = h' \psi(g').
$$

{The homotopy fibered product $G \tilde{\times}_H G'$ comes with natural projections $\pi_1$ onto $G$, $\pi_2$ onto $G'$, 
and a homotopy $\phi\pi_1\cong\psi\pi_2$, defining a commuting up to homotopy square that is universal:
$$\xymatrix{
  G\tilde{\times}_H G'\ar[r]^{\pi_2} \ar[d]_{\pi_1} \ar@{}[dr]|{\cong}& G' \ar[d]^{\psi}\\
 G \ar[r]_{\phi} & H.
}$$

\begin{proposition}
If $\psi$ is a Morita morphism, then $G\tilde{\times}_H G'$ is a well-defined Lie groupoid, and $\pi_1$ is a Morita fibration.    
\end{proposition}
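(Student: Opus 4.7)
The plan is to establish $G\tilde{\times}_H G'$ as a Lie groupoid, check that $\pi_1$ is a fibration, and then invoke the kernel criterion recalled earlier (a fibration is Morita precisely when its kernel equals the submersion groupoid of its object-level map). The two Morita hypotheses on $\psi$ enter at distinct steps: essential surjectivity for the smooth structure, full faithfulness for the kernel computation.

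For the manifold structure, I would use the standard formulation of essential surjectivity: the map $H\times_{s,\psi}M' \to N$, $(h,x')\mapsto t(h)$, is a surjective submersion. Pulling back along $\phi\colon M\to N$ yields the object space $M\times_N H\times_N M'$ as a smooth manifold on which $(x,h,x')\mapsto x$ is a surjective submersion. The arrow space I would identify with $\operatorname{Arr}(G\tilde{\times}_H G') \cong \operatorname{Obj}(G\tilde{\times}_H G')\times_{M\times M'}(G\times G')$, fibered via $(x,h,x')\mapsto(x,x')$ and via the pair of source maps; since the latter is a submersion, this is again a manifold. The groupoid operations are smooth by inspection, the only nontrivial point being that the target of $(g,h,g')$ is $(t(g),\phi(g)h\psi(g')^{-1},t(g'))$, where the conjugation is smooth in all arguments.

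It is then immediate that $\pi_1$ is a fibration: surjectivity and submersivity of $(\pi_1)_0$ were just established, and $(\pi_1,s)\colon \operatorname{Arr}\to G\times_M\operatorname{Obj}$ becomes, under the identification of the arrow space, the projection off the $G'$-factor, hence a surjective submersion. For the Morita property, I would show that $\ker(\pi_1)$ equals the submersion groupoid of $(\pi_1)_0$. An arrow lies in the kernel iff its $G$-component is a unit $1_x$; its source $(x,h,s(g'))$ and target $(x,h\psi(g')^{-1},t(g'))$ then share the $M$-component $x$. Conversely, given a pair of objects $\bigl((x,h_1,x_1'),(x,h_2,x_2')\bigr)\in\operatorname{Obj}\times_M\operatorname{Obj}$, the element $h_1^{-1}h_2\in H$ has source $\psi(x_2')$ and target $\psi(x_1')$ by the defining compatibility, and full faithfulness of $\psi$ (the Cartesian square $G'\to H\times_{N\times N}(M'\times M')$) provides a unique $g'\in G'$ with $s(g')=x_2'$, $t(g')=x_1'$, and $\psi(g')=h_1^{-1}h_2$. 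This identifies $\ker(\pi_1)$ with $\operatorname{Obj}\times_M\operatorname{Obj}$ as Lie groupoids, so $\pi_1$ is Morita.

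The hard part is not any single calculation but the careful placement of the two Morita hypotheses: essential surjectivity is exactly what is needed to make $\operatorname{Obj}$ smooth (otherwise the fibered product $M\times_N H\times_N M'$ may fail to be a manifold, since $\phi$ need not be transverse to anything), and full faithfulness is exactly what is needed to produce the unique lift $g'$ above. Once these are in place, every remaining verification reduces to submersion pullbacks and the routine smoothness of the groupoid operations of $G$, $G'$, and $H$.
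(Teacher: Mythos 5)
Your argument is correct and is essentially the standard proof (the paper itself omits it, deferring to the references on homotopy fibered products): essential surjectivity of $\psi$ gives the smooth structure on objects as a pullback of a surjective submersion, the arrow space is the fibered product over the source maps, and full faithfulness identifies $\ker(\pi_1)$ with the submersion groupoid of $(\pi_1)_0$, which by the criterion recalled in the paper makes the fibration $\pi_1$ Morita. The only detail worth adding is that the target map of $G\tilde{\times}_H G'$ is also a surjective submersion (it is the source map composed with the inversion diffeomorphism), and that the identification of $\ker(\pi_1)$ with $\operatorname{Obj}\times_M\operatorname{Obj}$ is a diffeomorphism, not merely a bijection, because it comes from the Cartesian square expressing full faithfulness.
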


Now if $G\from H\to G'$ and $G'\from H'\to G''$ are Morita equivalences, their composition can be defined by means of the homotopy fibered product of $H\to G'\from H'$:
$$\xymatrix@R=10pt@C=10pt{
& & H\tilde\times_{G'}H' \ar[dl] \ar[dr] & & \\
& H \ar[dl] \ar[dr] & & H' \ar[dl] \ar[dr] & \\
G & & G' & & G''.
}$$

A {\bf differentiable stack} is the Morita equivalence class of a Lie groupoid $G\toto M$ \cite{bx}, denoted by  $[G]$. The geometry of the stack $[G]$ is expressed by the properties and constructions on $G\toto M$ that are invariant under Morita equivalence.  For example, if $\phi: G'\to G$ is a Morita morphism, then $G'$ is proper if and only if so is $G$. The stacks corresponding to proper groupoids are called {\bf separated} or {\bf Hausdorff}.
It follows from the linearization theorem that a Hausdorff stack is locally modeled by a linear representation of a compact group.

\begin{examples}
\begin{enumerate}[(i)]
\item By regarding manifolds as unit grou\-po\-ids we obtain a one-to-one correspondence between manifolds and Hausdorff stacks without isotropy. 
\item The Cech groupoid construction sets a one-to-one correspondence between orbifolds and Hausdorff stacks with finite isotropy. 
\end{enumerate}
\end{examples}

%




A {\bf stacky map} $\psi/\phi:[G]\to [H]$ is given by a fraction of morphisms $G\xfrom{\phi} \tilde G\xto\psi H$ where $\phi$ is Morita, and two fractions define the same stacky map, $\psi/\phi=\psi'/\phi'$, if there is a third fraction $\phi''/\psi''$ fitting into a commutative diagram up to homotopy:
$$\xymatrix@R=10pt{
 & \tilde G \ar[ld]_{\phi} \ar[rd]^{\psi} & \\
G  & \tilde G'' \ar[u] \ar[d] \ar[r]^{\psi''} \ar[l]_{\phi''} & H. \\
& \tilde G' \ar[ul]^{\phi'} \ar[ur]_{\psi'} & }$$

Homotopy fibered products can be used to show that (i) the above relation on fractions is indeed an equivalence relation, so stacky maps are well-defined, (ii) any stacky map can be realized by a fraction $\psi/\phi$ with $\phi$ a Morita fibration, and (iii) there is a well-defined composition of stacky maps, leading to a well-defined category of differentiable stacks.


Given a Lie groupoid  $G\toto M$  and a submersion $\tau:\U=\coprod_i U_i\to M$ defined by an open cover, denote by $G_\U=\tau^*G$ the pullback groupoid (generalizing the Cech groupoid of $\U$). Since any surjective submersion admits local sections, it is always possible to realize a stacky map by a fraction
$$G\from G_\U\to H$$
for some open cover of $M$. This observation leads to a cocycle description of stacky maps \cite{dH}.

\begin{example}
Let $M$ be a manifold and $K$ be a Lie group. We will describe stacky maps $[M]\to [K]$. For an open cover $\U$ of $M$ with Cech groupoid $G_\U$, the
fraction 
$$
M\from G_\U\to K
$$ 
is the same as a $K$-cocycle supported on $\U$, and two such fractions are equivalent if and only if the {cocycles} define the same {principal $K$-bundle}. Thus, stacky maps $[M]\to [K]$ are in one-to-one correspondence with principal $K$-bundle over $M$. The stack $[K]$ is called the {\bf classifying stack} of $K$, and it can be regarded as a finite-dimensional model for the classifying space $BK$.
\end{example}

The previous example can be generalized to the case when $M$ and $K$ are replaced by arbitrary Lie groupoids $G$ and $H$.
The isomorphism class of a right-principal bibundle 
$$
\begin{matrix}
G & \action & P & \curvearrowleft & H \\
\downdownarrows & \swarrow & & \searrow & \downdownarrows\\
M & & & & N
\end{matrix}
$$
is called a  {\bf Hilsum-Skandalis map} $[P]:G\dasharrow H$ \cite{hs}.
Given a right-principal bibundle, one can build the action grou\-poid $G\ltimes P\rtimes H$ and obtain a fraction 
$$
G\from G\ltimes P\rtimes H\to H.
$$
See e.g.\cite[$\S$ 2.5]{mmbook2} for the next result.

\begin{proposition}
The above construction sets a one-to-one correspondence between Hilsum-Skan\-dalis maps $[P]:G\dasharrow H$ and stacky maps $[G]\to[H]$. In particular, $G$ and $H$ are Morita equivalent if and only if they fit into a principal bibundle $G\action P \raction H$.
\end{proposition}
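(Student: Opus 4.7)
The plan is to establish the bijection by constructing maps in both directions and verifying that they descend to mutually inverse correspondences on the respective equivalence classes: isomorphism of bibundles on one side, common-refinement equivalence of fractions on the other.

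For the forward direction, given a right-principal bibundle $G\action P\raction H$, I would verify that the action groupoid $G\ltimes P\rtimes H\toto P$ equipped with the projection $\pi_G$ onto $G$ is a Morita morphism. The key observation is that right-principality forces the left moment map $\mu_L:P\to M$ to be a surjective submersion, realizing $P$ as a principal $H$-bundle over $M$. Condition (M1) then reduces to the fact that the orbits of the action groupoid on $P$ project precisely onto the $G$-orbits on $M$; conditions (M2) and (M3) follow because isotropies and normal spaces are transferred faithfully along the principal-bundle fibers of $\mu_L$. This produces the desired fraction $G\xfrom G\ltimes P\rtimes H\xto H$, and one checks directly that isomorphic bibundles yield the same fraction up to canonical isomorphism.

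For the inverse direction, given a stacky map $[G]\to[H]$, I would first represent it by a fraction $G\xfrom{\phi}\tilde G\xto{\psi}H$ with $\phi$ a Morita fibration, which the excerpt notes is always possible. From this data I would construct a bibundle as the quotient $P=(G\times_{s,M,\phi_0}\tilde M\times_{\psi_0,N,t_H}H)/\tilde G$ for the diagonal $\tilde G$-action (acting on the first factor by right multiplication via $\phi$, on the middle factor by the source map, and on the last by left multiplication via $\psi$). The residual left $G$-action lives on the first factor, the residual right $H$-action on the third, and the moment maps are induced by $t$ and $s_H$. Right-principality on the $H$-side combines the principality of the universal bundle $t_H:H\to N$ with the fact that $\phi$ is a Morita fibration.

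The hard part is verifying that these two assignments are well-defined on equivalence classes and mutually inverse. Two equivalent fractions (related by a third fraction as in the definition of stacky map) must yield isomorphic bibundles, and conversely isomorphic bibundles must yield fractions admitting a common refinement. Both comparisons can be handled uniformly by observing that bibundle composition and the homotopy fibered product construction of fractions are two incarnations of the same bicategorical composition, so that the universal property of the homotopy fibered product established earlier supplies the required natural equivalences. The ``in particular'' assertion is then immediate: $G$ and $H$ are Morita equivalent iff the corresponding stacky map is invertible iff the associated bibundle is principal on both sides.
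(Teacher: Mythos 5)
Your forward construction coincides with the one the paper actually records (forming the double action groupoid $G\ltimes P\rtimes H$ and projecting to the two sides), and beyond that the paper gives no proof at all, deferring to Moerdijk--Mr\v{c}un; so the substance of your proposal is your own. A remark on that first step: rather than verifying (M1)--(M3) by hand, it is cleaner to observe that the projection $G\ltimes P\rtimes H\to G$ is a fibration whose kernel is, by right-principality, exactly the submersion groupoid of the left moment map $\mu_L:P\to M$, and then invoke the criterion already stated in the Morita section (``a fibration is Morita if and only if its kernel is identified with the submersion groupoid of $\phi_0$'').

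Two points in your outline are genuine gaps rather than routine checks. First, in the inverse direction you must justify that $P=(G\times_M\tilde M\times_N H)/\tilde G$ is a smooth manifold: this needs the diagonal $\tilde G$-action to be free and proper. Freeness does follow from (M2) for $\phi$ (an element of $\ker\phi$ sitting in an isotropy group must be a unit), but properness requires a separate argument using that $\phi$ is Morita, and you do not address it. Second, and more seriously, the ``hard part'' (well-definedness on equivalence classes and mutual inverseness) is discharged by asserting that bibundle composition and the homotopy fibered product are ``two incarnations of the same bicategorical composition.'' That assertion is essentially the proposition being proved, so as written the argument is circular. What is needed is the direct check that a homotopy between fractions induces a $G$--$H$-equivariant diffeomorphism of the associated quotients, and that the round trips recover $P$ (via $(G\times_M P\times_N H)/(G\ltimes P\rtimes H)\cong P$) and recover a fraction equivalent to the original one through its homotopy fibered product with the constructed action groupoid. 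Finally, for the ``in particular'' you route through ``Morita equivalent iff the stacky map is invertible,'' which is not the paper's definition of Morita equivalence and is itself unproved here; the shorter path is to check directly that both legs of the fraction are Morita precisely when the bibundle is principal on both sides.
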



The category of representations $\mathrm{Rep}(G)$ of a Lie groupoid $G$ is a Morita invariant. More generally, we consider the {\em derived category} $\mathrm{VB}[G]$, with VB-groupoids $\Gamma\to G$ as objects and homotopy classes of maps over $\id_G$ as arrows. (There is an embedding $\mathrm{Rep}(G)\to \mathrm{VB}[G]$ coming from the identification of representations with VB-grou\-poids with  trivial core, Thm.~\ref{thm:ruthVB}.)
A key fact \cite{dho} is that a VB-groupoid morphism
$\Gamma\to \Gamma'$ covering $\id_G:G\to G$ is Morita if and only if it yields a quasi-isomorphism of the corresponding core complexes, if and only if it is invertible up to homotopy. The following is proven in \cite{dho}.


\begin{theorem}
If $\phi:G'\to G$ is a Morita morphism, then $\phi^*:\mathrm{VB}[G]\to \mathrm{VB}[G']$ is an equivalence of categories, extending the equivalence $\phi^*:\mathrm{Rep}(G)\to \mathrm{Rep}(G')$.
\end{theorem}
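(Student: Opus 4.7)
The plan is to apply the Grothendieck construction (Thm.~\ref{thm:ruthVB}) to translate the question into one about 2-term representations up to homotopy, and to exploit the characterization recalled just before the theorem: a VB-groupoid morphism over $\id_G$ is Morita if and only if it is invertible up to homotopy, so the isomorphisms in $\mathrm{VB}[G]$ are precisely the Morita VB-maps.

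I would first construct $\phi^*$ on objects. When $\phi$ is a Morita fibration, $\phi^*\Gamma:=\Gamma\times_G G'\to G'$ is a well-defined VB-groupoid with core canonically $\phi_0^*C_\Gamma$; for a general Morita morphism one replaces this by the homotopy fibered product from Sec.~\ref{sec:morita}, or reduces to the fibration case by factoring $\phi$ through $G\tilde\times_G G'\to G'$ (the second leg being invertible up to homotopy, so pullback along it is trivially an equivalence). By the core-complex characterization, $\phi^*$ sends Morita VB-maps to Morita VB-maps and is compatible with homotopies, hence descends to a well-defined functor $\mathrm{VB}[G]\to\mathrm{VB}[G']$. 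Since it preserves triviality of the core, it restricts to the standard equivalence $\phi^*:\mathrm{Rep}(G)\to\mathrm{Rep}(G')$, giving the ``extending'' part of the statement for free.

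For full faithfulness, I would represent $\mathrm{Hom}_{\mathrm{VB}[G]}(\Gamma_1,\Gamma_2)$ as the zeroth cohomology of a double complex built from the nerve of $G$ with coefficients in $\mathrm{Hom}(E^\bullet_1,E^\bullet_2)$ — essentially the Hom-complex of two representations up to homotopy modulo chain homotopy — and invoke the Morita invariance of such groupoid cohomology, which in turn follows from the fact that $\phi$ induces an equivalence of the nerves $N_\bullet G'\to N_\bullet G$ viewed as simplicial stacks. For essential surjectivity, I would descend a given $\Gamma'\to G'$ back to $G$ using that the kernel of a Morita fibration $\phi$ is the submersion groupoid $M'\times_M M'\toto M'$ of $\phi_0$: a representation up to homotopy of a submersion groupoid is derived descent data for a 2-term complex on the base (extending item (v) of the Examples block for $\mathrm{Rep}$ of submersion groupoids), producing $\Gamma\to G$ with $\phi^*\Gamma$ quasi-isomorphic, hence Morita, to $\Gamma'$.

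The main obstacle I expect is precisely this derived descent statement: one must descend the operator $\partial'$, the chain map $\rho'$, and the chain homotopy $\gamma'$ in a way that preserves the Bianchi identity modulo the ambiguity in the cleavages used in the Grothendieck construction, and then verify that the reconstructed object is independent of these choices up to homotopy. This is the technical heart of the proof; the cohomological invariance needed for full faithfulness and the mere functoriality of $\phi^*$ on derived categories are essentially formal consequences of the machinery already developed in the paper.
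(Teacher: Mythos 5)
A preliminary caveat: this survey does not actually prove the theorem --- it states it and refers to \cite{dho} --- so there is no internal proof to measure your proposal against; what follows assesses the proposal on its own terms. Your overall architecture is sensible and consistent with the toolkit the paper assembles: defining $\phi^*$ by fibered product (note that $\Gamma\times_G G'$ exists for \emph{any} morphism $\phi$, since the VB-projection $\Gamma\to G$ is a surjective submersion, so no restriction to fibrations is needed to define $\phi^*$ on objects), reducing to the Morita fibration case via the factorization of $\phi$ through the homotopy fibered product (beware that the Morita \emph{fibration} is the projection $G\tilde{\times}_G G'\to G$, while the leg invertible up to homotopy is the inclusion $G'\to G\tilde{\times}_G G'$, not a map to $G'$ as you wrote), and using the quasi-isomorphism characterization of VB-Morita maps to see that $\phi^*$ detects isomorphisms of $\mathrm{VB}[G]$ correctly and restricts to $\mathrm{Rep}$.

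The genuine gap is that both substantive steps are asserted rather than carried out, and each hides a real difficulty. For full faithfulness you invoke Morita invariance of $H_d(G,E)$ applied to the Hom-object of two $2$-term representations up to homotopy; but that Hom-object is a graded ruth concentrated in degrees $\{-1,0,1\}$, whereas the invariance theorem quoted in $\S$~\ref{sec:nerve} is stated for $E=\bigoplus_{k=0}^{m}E_k$ and is itself only cited to \cite{crainic,dhos} (the latter listed as work in progress), so you must either extend it to this range or argue directly. For essential surjectivity, ``derived descent'' along the kernel $M'\times_M M'\toto M'$ of the Morita fibration cannot proceed by descending $E_1$ and $E_0$ separately: the individual graded pieces of a representation up to homotopy carry no action of the submersion groupoid --- only the total complex does, and only up to the homotopy $\gamma$ --- so the naive quotient need not be a vector bundle, and the descended $\partial$, $\rho$, $\gamma$ depend on the cleavage in a way that must be controlled before the Bianchi identity can even be checked. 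This is precisely where the invariance of the core complex up to quasi-isomorphism has to be exploited, and it is the actual content of \cite{dho}; as written, your proposal identifies the correct skeleton of the argument but leaves its load-bearing step unproved.
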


Objects in $\mathrm{VB}[G]$ can be regarded as {\em stacky vector bundles}, and examples include the tangent and cotangent stacks $[TG]$ and $[T^*G]$.  The Lie 2-algebra of multiplicative vector fields (see $\S$ \ref{sec:mult}) is well defined, up to homotopy equivalence, for the stack $[G]$, and encodes the sections of $[TG]\to [G]$ (i.e., vector fields on $[G])$ \cite{el,ow} (see also \cite{bclx}).



\section{Nerve and cohomology}\label{sec:nerve}



To define cohomology of Lie groupoids, we first introduce their {\em nerve}.
We refer to \cite{gj} for the basics on simplicial objects. 

The {\bf nerve} of a Lie groupoid $G\toto M$   is the simplicial manifold $N(G)=(G_n,d_i,s_j)$ with $G_0=M$, $G_1=G$, and $G_n$ given by
the chains of $n$ composable arrows, 
$$
x_n\xfrom{g_n}x_{n-1}\xfrom{g_{n-1}}\dots\xfrom{g_2} x_1\xfrom{g_1}x_0.
$$
By transversality, each $G_n$ is an embedded submanifold of $G^n$. The {\em face maps} $d_i:G_n\to G_{n-1}$ erases $x_i$, composing $g_{i+1}$ and $g_i$ if $0<i<n$, and discarding the first or last arrow if $i=0,n$. The {\em degeneracy maps} $s_j:G_n\to G_{n+1}$ repeats $x_j$ and inserts an identity. One can associate a topological space $|N(G)|$ to $N(G)$ by means of its {\em geometric realization}, which is a model for the classifying space $BG$ of principal $G$-bundles.






The nerve yields a fully faithful functor 
$$
\text{Lie groupoids}\xto{\text{Nerve}}\text{Simplicial Manifolds}
$$
whose essential image can be described by a ``horn-filling'' whose generalization leads to {\em higher Lie groupoids}, see e.g. \cite{duskin,henriques}. 

For a Lie groupoid $G\toto M$, its algebra of {\bf differentiable cochains} $(C(G),\delta,\cup)$ is the differential graded algebra of functions on the nerve:
\begin{enumerate}[\ \ $\bullet$]
\item $C(G)=\bigoplus_{p\geq0}C^\infty(G_p)$,
\item $\delta=\sum_{i}(-1)^id_i^*:C^p(G)\to C^{p+1}(G)$,
\item $(f_1\cup f_2)(g)=(-1)^{pq}f_1(t_qg)f_2(s_pg)$, where $g\in G_{p+q}$, $f_1\in C^q(G)$, $f_2\in C^p(G)$.
\end{enumerate}
Here use the notation $s_pg\in G_p$ and $t_qg\in G_q$ for the chains formed by the first $p$ arrows and the last $q$ arrows of $g \in G_n$, respectively.
The {\bf differentiable cohomology} of $G$, denoted by $H_{d}(G)$, is the cohomology of $C(G)$. Elements in $H_d^0(G)$ are the basic functions on $M$, i.e., those which are constant along the orbits, and $H_d^1(G)$ consists of multiplicative functions on $G$ modulo homotopy (i.e., modulo functions of the form $t^*f-s^*f$, for $f\in C^\infty(M)$). 

Given a vector bundle $E\to M$, an {\bf $E$-valued cochain} is a section 
$c\in C^p(G,E)=\Gamma(G_p,t_0^*E)$, and it is {\bf normalized} if it vanishes on degenerate simplices. The space $C(G,E)=\oplus_p C^p(G,E)$ has a canonical $C(G)$-module structure, given by 
$$(c\cdot f)(g)=(-1)^{pq}c(t_qg)f(s_pg)$$
where $g\in G_{p+q}$, $c\in C^q(G,E)$ and $f\in C^p(G)$.
See \cite[$\S$ 2.3]{arcr} for the next result.

\begin{proposition}
A representation $G\action E$ is equivalent to a degree 1 differential $D$ in $C(G,E)$ preserving the normalized cochains and satisfying the Leibniz rule, 
$$
D(c\cdot f)=D(c)\cdot f+(-1)^{q}c\cdot\delta(f),
$$ 
where $c\in C^q(G,E)$ and $f\in C^p(G)$.   
\end{proposition}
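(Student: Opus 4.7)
I will prove the equivalence by constructing inverse assignments in both directions.

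\emph{From $\rho$ to $D$.} Given a representation $\rho$, define, for $c\in C^p(G,E)$ and $g=(g_{p+1},\ldots,g_1)\in G_{p+1}$,
\[
(Dc)(g)=\sum_{i=0}^{p}(-1)^i c(d_ig)+(-1)^{p+1}\rho_{g_{p+1}}\,c(d_{p+1}g).
\]
For $i\le p$ the face $d_i$ preserves the top vertex $x_{p+1}$, so $c(d_ig)\in E_{t_0g}$ already lies in the correct fiber. The last face $d_{p+1}$ erases the top vertex, producing a value in $E_{x_p}$, which is then transported to $E_{x_{p+1}}$ via $\rho_{g_{p+1}}$. I then check: (i) $D^2=0$, using the simplicial identities $d_id_j=d_{j-1}d_i$ $(i<j)$; the only new contributions come from pairing the twisted face with $d_{p+2}$ or $d_{p+1}$ at the next level, and they cancel thanks to $\rho_{hg}=\rho_h\rho_g$; (ii) preservation of normalized cochains, using $\rho_{1_x}=\id$; (iii) the Leibniz identity, by expanding $D(c\cdot f)$ via the cup product formula and matching terms with $Dc\cdot f$ and $c\cdot \delta f$ using only simplicial identities.

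\emph{From $D$ to $\rho$.} Given $D$, for an arrow $g:x\to y$ and $v\in E_x$, pick any $e\in\Gamma(E)=C^0(G,E)$ with $e(x)=v$ and set
\[
\rho_g(v):=e(y)-(De)(g).
\]
To show independence from the choice of $e$, apply the Leibniz rule to $c=e$ and $f\in C^\infty(M)=C^0(G)$:
\[
(D(fe))(g)=(De)(g)\,f(x)+e(y)\bigl(f(y)-f(x)\bigr).
\]
If $e(x)=0$, write $e=\sum_\alpha f^\alpha e^\alpha$ locally in a frame with $f^\alpha(x)=0$, globalize by a partition of unity using $\R$-linearity of $D$, and deduce $(De)(g)=e(y)$. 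This makes $\rho_g(v)$ well defined. Fibrewise linearity and smoothness of $\rho$ transfer from the corresponding properties of $D$; $\rho_{1_x}=\id$ comes from the normalization axiom; and $\rho_{hg}=\rho_h\rho_g$ is extracted from $D^2e=0$ evaluated on a pair of composable arrows, with the quadratic $\rho$-terms arising precisely from pairing the two twisted faces.

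\emph{Main obstacle.} The crux is the well-definedness of $\rho_g$ in the reverse direction: one must reduce a section vanishing at a point to a sum of frame sections scaled by functions vanishing there, and then apply the Leibniz rule term by term. This is a local Serre--Swan/Hadamard step that turns the abstract Leibniz identity into the statement that $(De)(g)$ is $C^\infty(M)$-tensorial on $e$ modulo the ``constant'' term $e(y)$. Once this is established, the remaining verifications---$D^2=0$ $\Leftrightarrow$ multiplicativity, normalization $\Leftrightarrow$ units, and mutual inverseness of the two constructions---reduce to formal computation with simplicial identities.
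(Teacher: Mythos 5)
The paper itself gives no proof of this proposition --- it defers to \cite[$\S$ 2.3]{arcr} --- and your construction is essentially the standard argument from that reference: the twisted simplicial differential in one direction, and recovery of $\rho_g$ from the action of $D$ on degree-zero cochains in the other, with well-definedness secured by the Leibniz rule plus the Hadamard/partition-of-unity step. Your formula for $D$ is compatible with the paper's conventions (the face needing the $\rho$-twist is indeed $d_{p+1}$, the one erasing the top vertex, since $E$-cochains take values in $t_0^*E$), and the resulting identity $(De)(g)=e(t(g))-\rho_g(e(s(g)))$ makes your two assignments inverse to each other in degree zero.

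The one step you under-argue is the claim that $\rho_{hg}=\rho_h\rho_g$ and the mutual inverseness of the two constructions ``reduce to formal computation with simplicial identities.'' To evaluate $(D^2e)(h,g)$ you must first know how $D$ acts on the $1$-cochain $De$, and, more generally, the correspondence is a bijection only because a differential $D$ satisfying the hypotheses is completely determined by its restriction to $C^0(G,E)$. That is not a simplicial identity: it follows from the fact that $C^p(G,E)=\Gamma(G_p,t_0^*E)$ is generated over $C^p(G)$ by pullbacks of sections of $E$ (locally finite sums $\sum_\alpha e_\alpha\cdot f_\alpha$), together with locality of $D$ (itself extracted from the Leibniz rule via bump functions), so that the Leibniz rule forces $(Dc)(h,g)=c(h)-c(hg)+\rho_h(c(g))$ and its higher-degree analogues. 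This is the same tensoriality argument you already invoke for the well-definedness of $\rho_g$, so the gap is readily filled; but as written, the multiplicativity of $\rho$ and the identity $D=D_{\rho_D}$ in positive degrees are asserted rather than proved.
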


This characterization of representations can be directly adapted to the case when $E= \oplus_{k=0}^{m} E_k$ is a graded vector bundle over $M$, and leads to the notion of {\bf representations up to homotopy} of $G$ on $E$  as a degree 1 differential in the space of $E$-valued cochains \cite{arcr}, extending the 2-term case discussed in $\S$ \ref{section:actions}. The {\bf cohomology with coefficients} $H_d(G,E)$ is the cohomology of $(C(G,E),D)$. In a proper groupoid, it is possible to define homotopy-type operators for cochains by averaging with a Haar system to prove the following vanishing theorem \cite{arcr,crainic}.

\begin{theorem}
Let $E=\bigoplus_{k=0}^m E_k\to M$ be a graded vector bundle with a representation up to homotopy of a proper Lie groupoid $G$. Then $H^n(G,E)=0$ for every $n>m$.
\end{theorem}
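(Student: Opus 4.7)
The plan is to exploit the natural bigrading on $C(G,E)$ to set up a spectral sequence whose $E_2$-page is computable via the vanishing theorem for ordinary differentiable cohomology of proper groupoids. Write $C^p(G,E)=\bigoplus_{i-j=p}\Gamma(G_i, t_0^*E_j)$, and decompose the degree-$1$ differential as $D=D_0+D_1+D_2+\cdots$, where $D_0$ encodes the internal differential $R_0:E_j\to E_{j-1}$ (sending $(i,j)\mapsto(i,j-1)$), $D_1$ is the ordinary groupoid coboundary twisted by the chain maps $R_1$ (sending $(i,j)\mapsto(i+1,j)$), and each higher $D_k$ encodes the coherence data $R_k$ (sending $(i,j)\mapsto(i+k,j+k-1)$). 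Each component raises total degree $i-j$ by one, as it must.

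Next, I would filter by the simplicial degree, $F^iC=\bigoplus_{i'\ge i}\Gamma(G_{i'},t_0^*E_\bullet)$. Since every $D_k$ raises $i$ by $k\ge 0$, this filtration is preserved, and the associated spectral sequence converges to $H(G,E)$: convergence is automatic because the filtration is bounded in each total degree, as only the finitely many $j\in\{0,\ldots,m\}$ can contribute. The $E_0$-differential is $D_0$, so $E_1^{i,j}=\Gamma(G_i, t_0^*\mathcal{H}^j(E,R_0))$. Because the axioms of a representation up to homotopy make $R_1$ a genuine chain map over $R_0$, it descends to an honest representation of $G$ on each cohomology bundle $\mathcal{H}^j(E,R_0)$, and the induced $d_1$ is the usual groupoid coboundary with these coefficients, giving $E_2^{i,j}=H^i_d(G,\mathcal{H}^j(E,R_0))$. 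Invoking the proper-case vanishing $H^i_d(G,W)=0$ for $i\ge 1$ and any representation $W$ --- proved by averaging against a Haar density $\sigma$ to build an explicit contracting homotopy $h_\sigma$ --- yields $E_2^{i,j}=0$ for every $i\ge 1$. In total degree $n>m$, every surviving term $\Gamma(G_i,t_0^*E_j)$ satisfies $i=n+j\ge n>m\ge 0$, so $i\ge 1$; consequently all contributions to $H^n(G,E)$ vanish.

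The main technical obstacle is the correct identification of the $E_1$- and $E_2$-pages: one must ensure that $\mathcal{H}^j(E,R_0)$ is a genuine vector bundle (which requires the cohomology sheaves of $R_0$ to have locally constant rank, either assumed or verified via the $G$-equivariance supplied by $R_1$), and check that $D_1$ truly descends to the standard coboundary with coefficients in the induced representation. Once $E_2$ vanishes in the relevant range, the higher differentials $d_r$ for $r\ge 2$ (reflecting the higher $D_k$'s) can produce no surviving terms. A conceptually cleaner alternative is to apply the homological perturbation lemma: lift the Haar-averaging homotopy $h_\sigma$ for the $D_1$-complex to a contracting homotopy for the full $D=D_0+D_1+D_2+\cdots$; the finite length $m$ of the grading guarantees that the perturbation series terminates and produces an explicit contracting homotopy in every total degree $n>m$.
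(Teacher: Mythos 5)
The paper itself does not prove this theorem: it only records that the proof proceeds by averaging cochains against a Haar system to produce homotopy-type operators, deferring to the cited references of Arias Abad--Crainic and Crainic. Your closing remark (perturb the Haar-averaging homotopy) is the route that matches those references; your primary spectral-sequence argument, however, has a genuine gap at the identification of the $E_1$- and $E_2$-pages. The fibrewise cohomology $\mathcal{H}^j(E,\partial)$ of the core complex is a vector bundle only when $\partial$ has locally constant rank, and this fails in general; the repair you suggest via $G$-equivariance does not work, because the chain maps $\rho_g$ are homotopy equivalences between fibres over points of the \emph{same orbit}, so they force the rank of $\mathcal{H}^j$ to be constant only along orbits, never transversally (already for the proper groupoid $M\toto M$ a representation up to homotopy is an arbitrary complex of vector bundles, with no constraint on the rank of its cohomology). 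Without local constancy, $\Gamma(G_i,t_0^*(-))$ does not commute with passing to cohomology, $H^i_d(G,\mathcal{H}^j)$ is not even defined as cohomology with coefficients in a representation, and the vanishing theorem for honest representations cannot be invoked. A telling symptom: if your $E_2$-computation were valid, the identical argument would give $H^n(G,E)=0$ for every $n\geq 1$, since all contributing terms have $i=n+j\geq 1$ already when $n\geq 1$; that is strictly more than the theorem asserts, and the weaker bound $n>m$ exists precisely because one cannot reduce to genuine representations on the cohomology bundles.

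The perturbation alternative is the right idea but is not yet a proof as stated. For a genuine representation up to homotopy the operator $D_1$ does not square to zero (one has $D_1^2=-(D_0D_2+D_2D_0)$ from $D^2=0$), so there is no ``$D_1$-complex'' carrying a contracting homotopy that you could then perturb. What one must actually do is define the averaging operator $h$ on cochains of positive simplicial degree using a normalized Haar density, verify by direct computation --- involving the structure operators $R_k$, the simplicial identities, and careful degree bookkeeping --- that $Dh+hD=\mathrm{id}+N$ with $N$ nilpotent in each total degree $n>m$ (this is where the finite amplitude $m$ and the hypothesis on $n$ enter: every component of a degree-$n$ cochain, and of its image under $h$, must sit in simplicial degree $\geq 1$ where $h$ and the homotopy identity are available), and then invert $\mathrm{id}+N$ to obtain the corrected contracting homotopy $h(\mathrm{id}+N)^{-1}$. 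Your sketch asserts the conclusion of that computation rather than performing it; supplying it is the actual content of the proof in the references the paper points to.
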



If $\phi: G'\to G$ is a morphism and $G\action E$ is a representation up to homotopy, then there is a {\em pullback representation} $\phi^*E$ of $G'$
and an {\em induced morphism} $\phi^*:H_d(G,E)\to H_d(G',\phi^*E)$. The next result shows that differentiable cohomology is a Morita invariant \cite{crainic,dhos}.

\begin{theorem}
If $G\action E$ is a representation (up to homotopy) of $G$ and
$\phi:G'\to G$ is a Morita morphism, then $\phi^*:H_d(G,E)\to H_d(G',\phi^*E)$ is an isomorphism.
\end{theorem}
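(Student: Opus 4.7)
The plan is to reduce the theorem to a descent statement along a surjective submersion, via two successive reductions.

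First, I would reduce to the case where $\phi$ is a Morita fibration. The homotopy fibered product $\tilde G = G \tilde\times_G G'$ built from $\id_G$ and $\phi$ carries two projections $\pi_1\colon \tilde G\to G$ and $\pi_2\colon \tilde G\to G'$, both Morita fibrations (by the symmetric version of the proposition of Section \ref{sec:morita}), satisfying $\pi_1\simeq \phi\circ \pi_2$, so $\pi_1^* = \pi_2^*\circ \phi^*$ on cohomology. Once the theorem is established for Morita fibrations both $\pi_1^*$ and $\pi_2^*$ are isomorphisms, hence so is $\phi^*$. After this reduction, $G' \cong \tau^*G$ is the pullback of $G$ along the surjective submersion $\tau = \phi_0\colon M'\to M$, and $\phi^*E = \tau^*E$.

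Second, I would reduce representations up to homotopy to ordinary representations. For $E=\bigoplus_{k=0}^m E_k$, the cochain complex $C(G,E)$ carries a decreasing filtration by the grading in $E$, preserved by $D$ and by $\phi^*$, whose associated graded equals the direct sum of the ordinary cochain complexes $(C(G,E_k),\delta)$ with the representation structures induced by the lowest component of $D$. The same holds on $C(G', \phi^*E)$, and $\phi^*$ is a morphism of the resulting convergent spectral sequences. By the comparison theorem, it suffices to prove the theorem for a single ordinary representation.

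The main step is descent for an ordinary $G$-representation $E$ under a Morita fibration $\phi$, where $G'=\tau^*G$. I would assemble the bisimplicial manifold
\[
X^{p,q} = \underbrace{G'_q\times_{G_q} \cdots \times_{G_q} G'_q}_{p+1 \text{ factors}},
\]
whose $p=0$ row is the nerve of $G'$ and whose $p=-1$ augmentation is the nerve of $G$. Taking sections of the pullback of $E$ yields a first-quadrant bicomplex. For each fixed $q$, the $p$-direction is the Cech complex of the surjective submersion $G'_q\to G_q$ (a base change of $(M')^{q+1}\to M^{q+1}$) with vector-bundle coefficients; a partition-of-unity contraction built from local sections of $\tau$ shows it is exact in positive degrees with $H^0 = C^q(G,E)$. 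The resulting degeneration of one spectral sequence identifies $H(\mathrm{Tot}(X))$ with $H_d(G, E)$ via the augmentation. A parallel bar-type analysis identifies $H(\mathrm{Tot}(X))$ with $H_d(G', \phi^*E)$ via the inclusion of the $p=0$ column, and a direct check of the edge maps identifies the composite isomorphism with $\phi^*$.

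The main obstacle is the descent (acyclicity) statement for the Cech complex of a surjective submersion with vector-bundle coefficients, handled by patching local sections of $\tau$ with a partition of unity on $M$ — crucially requiring no properness, since the partition lives on the base rather than on the groupoid. Once this is in place, the bisimplicial bookkeeping and the identification of the induced abutment map with $\phi^*$ follow along standard lines, as carried out in \cite{crainic,dhos}.
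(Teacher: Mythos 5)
The paper itself contains no proof of this theorem: it is stated with citations to \cite{crainic} (ordinary representations) and to the work in progress \cite{dhos} (the general case), so there is no in-text argument to compare against. Your overall architecture --- reduce to Morita fibrations via the homotopy fibered product and homotopy invariance of $H_d$, then prove descent along a surjective submersion by a bisimplicial double complex whose rows are \v{C}ech complexes made exact by a partition of unity living on the base --- is exactly the strategy of the cited references, and your first and third steps are sound (modulo routine checks, e.g.\ that homotopic morphisms induce chain-homotopic maps on $E$-valued cochains, which is needed to conclude $\pi_1^*=\pi_2^*\phi^*$).

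Your second reduction, however, contains a genuine error. For a representation up to homotopy the differential $D=\sum_m D_m$ has components shifting the coefficient degree by $m-1$: in the $2$-term case these are the core differential $\partial$ (which lowers it), the quasi-action $\rho$ (which preserves it), and the homotopy $\gamma$ (which raises it). Hence no filtration of $C(G,E)$ by the grading of $E$ is preserved by $D$; the only $D$-invariant filtration is by simplicial degree, whose associated graded is $(\Gamma(G_p,t_0^*E_\bullet),\partial)$, not the direct sum you describe. Worse, the structure you would put on your associated graded is the quasi-action $\rho$, which is multiplicative only up to $\gamma$, so $(C(G,E_k),\delta_\rho)$ is not even a complex. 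Fortunately this step is unnecessary: the \v{C}ech differential in the $p$-direction of your bisimplicial object, and its partition-of-unity contraction, never interact with the representation structure --- they only see the graded vector bundle $t_0^*E\to G_q$. Since pullback along groupoid morphisms is a chain map for cochains valued in a representation up to homotopy, the columns $(C(X^{p,\bullet},E),D)$ form a complex of complexes with exact augmented rows, and your third step goes through verbatim with graded coefficients. Delete the second reduction and run the descent argument directly.
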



A variant of the cohomology of a Lie groupoid that combines the smooth and simplicial structures goes as follows. The {\bf de Rham complex} of $G\toto M$  is the double complex $(\Omega^q(G_p),\delta,d_{dR})$ with $\delta(\omega)=\sum_{i}(-1)^id_i^*(\omega)$ and $d_{dR}$ the de Rham differential: 
$$
\xymatrix@R=10pt@C=10pt{
\vdots & \vdots & \vdots & \\
\Omega^2(M) \ar[r] \ar[u] & \Omega^2(G_1) \ar[r] \ar[u] & \Omega^2(G_2) \ar[r] \ar[u] &  \dots\\
\Omega^1(M) \ar[r] \ar[u] & \Omega^1(G_1) \ar[r] \ar[u] & \Omega^1(G_2) \ar[r] \ar[u] &  \dots\\
C^\infty(M) \ar[r]^\delta \ar[u]^{d_{dR}} & C^\infty(G_1) \ar[r] \ar[u] & C^\infty(G_2) \ar[r] \ar[u] &  \dots
}
$$
The cohomology of the corresponding total complex is the {\bf de Rham cohomology} $H_{dR}(G)$ and agrees with the cohomology of the classifying space $BG$ \cite{bss}.

\begin{examples}
\begin{enumerate}[i)]
\item The de Rham cohomology of $M\toto M$ is that of $M$.
\item If $G_\U$ is the Cech groupoid of an open cover in $M$, then its de Rham complex is known as the {\em Cech-de Rham complex}, and $H_{dR}(G_\U)\cong H_{dR}(M)$, see \cite{botu}.
\item When $G=K\ltimes M$ is an action groupoid we recover the equivariant cohomology of $K\action M$, $H_{dR}(G)=H_K(M)$.
\end{enumerate}
\end{examples}


The de Rham cohomology $H_{dR}(G)$ is also a Morita invariant, see 
\cite{behrend}. Cohomology of Lie groupoids have counterparts for Lie algebroids, with Van-Est-type theorems relating them \cite{arcr11,crainic}.

Multiplicative forms on $G$ ($\S$ \ref{sec:mult}) have a simple interpretation in the de Rham complex as elements of bi-degrees $(1,k)$ that are closed with respect to the horizontal differential $\delta$. In particular, a multiplicative $k$-form $\omega$ with $d_{dR}\omega=0$ defines a degree $(k+1)$ cocycle in the total de Rham complex. A way to relax the closedness of $\omega$ is by considering more general cocycles $(\eta,\omega)$, where $\eta\in \Omega^{k+1}(M)$ and $\omega\in \Omega^k(G)$; the cocycle condition in this case becomes $d_{dR}\eta=0$, $\delta \omega=0$ (i.e., $\omega$ is multiplicative) and $d_{dR}\omega = t^*\eta-s^*\eta$. 

Such cocycles yield generalizations of symplectic groupoids ($\S$ \ref{sec:mult}).
A {\em $\eta$-twisted symplectic grou\-poid} is a Lie groupoid $G\toto M$ equip\-ped with a multiplicative and nondegenerate 2-form $\omega \in \Omega^2(G)$, and a closed 3-form $\eta \in \Omega^3(M)$ such that $d_{dR}\omega = t^*\eta-s^*\eta$. By relaxing the nondegeneracy of $\omega$ in a suitable, controlled way, one obtains {\em quasi-sym\-plec\-tic groupoids} (aka {\em $\eta$-twisted presymplectic grou\-poids}) \cite{bcwz,xu}; as shown in \cite[$\S$ 5.2]{dho} this ``weak nondegeneracy'' amounts  to requiring that $\omega^\flat: TG \to T^*G$ be a Morita morphism, rather than an isomorphism; equivalently, the induced map of core complexes 
$$
(A\stackrel{\rho}{\to} TM) \to (T^*M \stackrel{\rho^*}{\to} A^*)$$
 is just required to be a quasi-isomorphism. 












\end{document}